\definecolor{thelinkcolor}{RGB}{0,0,150}
\crefname{equation}{}{}
\crefname{theorem}{theorem}{theorems}
\crefname{example}{example}{examples}
\crefname{lemma}{lemma}{lemmas}
\crefname{proposition}{proposition}{propositions}
\crefname{figure}{figure}{figures}
\crefname{table}{table}{tables}
\Crefname{equation}{}{}
\Crefname{theorem}{Theorem}{Theorems}
\Crefname{example}{Example}{Examples}
\Crefname{lemma}{Lemma}{Lemma}
\Crefname{proposition}{Proposition}{Proposition}
\Crefname{figure}{Figure}{Figures}
\Crefname{table}{Table}{Tables}
\setlist[itemize]{itemindent=0ex,itemsep=-0.5ex,leftmargin=2ex,topsep=5pt}
\setlist[enumerate]{label={\arabic*)},leftmargin=*,nolistsep,noitemsep,topsep=1ex}
\newcommand{\given}{;\,}								
\newcommand{\dx}{\,{\rm d}x}								
\newcommand{\dxi}{\,{\rm d}\xi}								
\newcommand{\maxphi}{\Phi^*}	
\newcommand{\maxphiinf}{\maxphi_\infty}
\newcommand{\maxphiT}{\maxphi_T}
\newcommand{\abs}[1]{\left\vert #1 \right\vert}				
\newcommand{\ddt}{\frac{{\rm d}}{{\rm d}t}}					
\newcommand\xqed[1]{\leavevmode\unskip\penalty9999 \hbox{}\nobreak\hfill\quad\hbox{#1}}
\newcommand\markendexample{\xqed{$\triangleleft$}}
\newcommand{\mosek}{{MOSEK}}
\newcommand{\sdpagmp}{{SDPA-GMP}}
\newcommand{\yalmip}{{YALMIP}}
\newcommand{\spotless}{{SPOT\sc{less}}}
\definecolor{mygreen}{RGB}{77,175,74}
\definecolor{myred}{RGB}{228,26,28}
\definecolor{myblue}{RGB}{55,126,184}
\definecolor{matlabgray}{RGB}{127,127,127}
\definecolor{matlabblue}{RGB}{0,113,188}
\definecolor{matlabred}{RGB}{216,82,24}
\definecolor{matlaborange}{RGB}{255,153,102}
\definecolor{matlabpurple}{rgb}{0.4940,0.1840,0.5560}
\definecolor{matlabgreen}{rgb}{0.0000,0.4980,0.0000}
\definecolor{matlabsafered}{RGB}{215,25,28}
\definecolor{matlabsafegreen}{RGB}{171,221,164}
\definecolor{lightbrown}{RGB}{149,99,99}
\definecolor{darkbrown}{RGB}{82,48,48}
\newcommand\solidrule[1][10pt]{\rule[0.5ex]{#1}{1.5pt}}
\newcommand\dashedrule{\mbox{%
		\solidrule[2pt]\hspace{2pt}\solidrule[2pt]\hspace{2pt}\solidrule[2pt]}}
\newcommand\dottedrule{\mbox{%
			\solidrule[1pt]\hspace{1pt}\solidrule[1pt]\hspace{1pt}\solidrule[1pt]\hspace{1pt}\solidrule[1pt]\hspace{1pt}\solidrule[1pt]\hspace{1pt}}}
\newcommand{\X}{\mathcal{X}}								
\newcommand{\R}{\mathbb{R}}		
\newcommand{\T}{\mathcal{T}}									
\newcommand{\V}{\mathcal{V}}								
\newcommand{\mysquare}[1]{%
	\protect\begin{tikzpicture}%
	\protect\draw[thick,color=#1] (0,0) -- (0.75ex,0) -- (0.75ex,0.75ex) -- (0,0.75ex) -- (0,0);
	\protect\end{tikzpicture}%
}
\newcommand{\mytriangle}[1]{%
	\protect\begin{tikzpicture}%
	\protect\draw[thick,color=#1] (0,0) -- (1ex,0) -- (0.5ex,0.866ex) -- (0,0);
	\protect\end{tikzpicture}%
}
\newcommand{\mycross}[1]{%
	\protect\begin{tikzpicture}%
	\protect\draw[thick,color=#1] (0,0) -- (1ex,1ex);
	\protect\draw[thick,color=#1] (0,1ex) -- (1ex,0);
	\protect\end{tikzpicture}%
}
\newcommand{\mycirc}[1]{%
	\protect\begin{tikzpicture}%
	\protect\draw[thick,color=#1] (0.5ex,0.5ex) circle (0.5ex);
	\protect\end{tikzpicture}%
}
\newtheorem{theorem}{Theorem}
\newtheorem{lemma}{Lemma}
\newtheorem{proposition}{Proposition}
\theoremstyle{definition}
\newtheorem{remark}{Observation}
\newtheorem{example}{Example}
\numberwithin{equation}{section}
\numberwithin{theorem}{section}
\numberwithin{lemma}{section}
\numberwithin{proposition}{section}
\numberwithin{definition}{section}
\numberwithin{remark}{section}
\numberwithin{example}{section}
\newcommand{\subalign}[1]{%
	\vcenter{%
		\Let@ \restore@math@cr \default@tag
		\baselineskip\fontdimen10 \scriptfont\tw@
		\advance\baselineskip\fontdimen12 \scriptfont\tw@
		\lineskip\thr@@\fontdimen8 \scriptfont\thr@@
		\lineskiplimit\lineskip
		\ialign{\hfil$\m@th\scriptstyle##$&$\m@th\scriptstyle{}##$\crcr
			#1\crcr
		}%
	}
}
\newcommand*\fsize{\dimexpr\f@size pt\relax}
\author[1]{Giovanni Fantuzzi\thanks{Email address for correspondence: \href{mailto:giovanni.fantuzzi10@imperial.ac.uk}{giovanni.fantuzzi10@imperial.ac.uk}}} 
\author[2]{David Goluskin\thanks{Email address for correspondence: \href{mailto:goluskin@uvic.ca}{goluskin@uvic.ca}}}
\affil[1]{Department of Aeronautics, Imperial College London, London, SW7 2AZ, United Kingdom.}
\affil[2]{Department of Mathematics and Statistics, University of Victoria, Victoria, BC, V8P 5C2, Canada.}
\title{Bounding	extreme events in nonlinear dynamics\\using convex optimization}
\begin{document}

\maketitle

\begin{small}
\noindent\textbf{Abstract.~}We study a convex optimization framework for bounding extreme events in nonlinear dynamical systems governed by ordinary or partial differential equations (ODEs or PDEs). This framework bounds from above the largest value of an observable along trajectories that start from a chosen set and evolve over a finite or infinite time interval. The approach needs no explicit trajectories. Instead, it requires constructing suitably constrained auxiliary functions that depend on the state variables and possibly on time. Minimizing bounds over auxiliary functions is a convex problem dual to the non-convex maximization of the observable along trajectories. {This duality is strong,  meaning that auxiliary functions give arbitrarily sharp bounds, for sufficiently regular ODEs evolving over a finite time on a compact domain.} When these conditions fail, strong duality may or may not hold; both situations are illustrated by examples. We also show that near-optimal auxiliary functions can be used to construct spacetime sets that localize trajectories leading to extreme events. Finally, in the case of polynomial ODEs and observables, we describe how polynomial auxiliary functions of fixed degree can be optimized numerically using polynomial optimization. The corresponding bounds become sharp as the polynomial degree is raised if strong duality and mild compactness assumptions hold. Analytical and computational ODE examples illustrate the construction of bounds and the identification of extreme trajectories, along with some limitations. As an analytical PDE example, we bound the maximum fractional enstrophy of solutions to the Burgers equation with fractional diffusion.

\paragraph*{Keywords.}
Extreme events, nonlinear dynamics, auxiliary functions, bounds, differential equations, polynomial optimization
\vspace*{-2ex}
\paragraph*{AMS subject classifications.}
93C10, 93C15, 93C20, 90C22, 34C11, 37C10, 49M29
\end{small}

\section{Introduction}
\label{s:intro}

Predicting the magnitudes of extreme events in deterministic dynamical systems is a fundamental problem with a wide range of applications. Examples of practical relevance include estimating the amplitudes of rogue waves in fluid or optical systems~\cite{Onorato2013}, the fastest possible mixing by incompressible fluid flows~\cite{Foures2014,Marcotte2018}, and the largest load on a structure due to dynamical forcing. In addition, extreme events relating to finite-time singularity formation are central to mathematical questions about the well-posedness and regularity of partial differential equations (PDEs). One such question is the Millennium Prize Problem concerning regularity of the three-dimensional Navier--Stokes equations~\cite{Carlson2006}, for which finite bounds on various quantities that grow transiently would imply the global existence of smooth solutions~\cite{Foias1989, Doering1995, Escauriaza2003, Doering2009}.

This work studies extreme events in dynamical systems governed by ordinary differential equations (ODEs) or PDEs. Specifically, given a scalar quantity of interest $\Phi$, we seek to bound its largest possible value along trajectories that evolve forward in time from a prescribed set of initial conditions. This maximum, denoted by $\maxphi$ and defined precisely in the next section, may be considered over all forward times or up to a finite time. Our definition of extreme events as maxima applies equally well to minima since a minimum of $\Phi$ is a maximum of $-\Phi$.

Bounding $\maxphi$ from above and from below are fundamentally different tasks. A lower bound is implied by any value of $\Phi$ on any relevant trajectory, whereas upper bounds are statements about whole classes of trajectories and require a different approach. Analytical bounds of both types appear in the literature for many systems with complicated nonlinear dynamics, but often they are far from sharp. More precise lower bounds on $\maxphi$ have sometimes been obtained using numerical integration, for instance to study extreme transient growth, optimal mixing, and transition to turbulence in fluid mechanics~\cite{Ayala2011, Ayala2014, Farazmand2017,Foures2014, Marcotte2018,Kerswell2014}. In such computations, adjoint optimization~\cite{Gunzburger2003} is used to search for an initial condition that locally maximizes $\Phi$ at a fixed terminal time, and a second level of optimization can vary the terminal time. Since both optimizations are non-convex, they give a local maximum of $\Phi$ but do not give a way to know whether it coincides with the global maximum $\maxphi$ or is strictly smaller. Thus, adjoint optimization cannot give upper bounds on $\maxphi$, even when made rigorous by interval arithmetic. {To find such an upper bound using numerical integration, one could use verified computations to find an outer approximation to the reachable set of trajectories starting from a bounded set~\cite{Cyranka2017}, and then bound $\maxphi$ from above by the global maximum of $\Phi$ on this approximating set. However, the latter is hard to compute if either $\Phi$ or the set on which it must be maximized are not convex.}

The present study describes a general framework for bounding $\maxphi$ from above that does not rely on numerical integration. This framework can be implemented analytically, computationally, or both, depending on what is tractable for the equations being studied. It falls within a broad family of methods, dating back to Lyapunov's work on nonlinear stability~\cite{Lyapunov1892}, whereby properties of dynamical systems are inferred by constructing \emph{auxiliary functions}, which depend on the system's state and possibly on time, and which satisfy suitable inequalities. Lyapunov functions \cite{Lyapunov1892,Datko1970}, which often are used to verify nonlinear stability, are one type of auxiliary functions. Other types can be used to approximate basins of attraction~\cite{Tan2006, Korda2013, Henrion2014, Valmorbida2017} and reachable sets~\cite{Magron2019,Jones2019}, estimate the effects of disturbances~\cite{Willems1972, Dashkovskiy2013, Ahmadi2016}, guarantee the avoidance of certain sets~\cite{Prajna2007, Ahmadi2017}, design nonlinear optimal controls~\cite{Lasserre2008, Henrion2008, Majumdar2014, Korda2016, Zhao2017, Korda2018}, bound infinite-time averages or stationary stochastic expectations~\cite{Chernyshenko2014, Fantuzzi2016siads, Kuntz2016, Goluskin2016lorenz, Tobasco2018, Korda2018a, Goluskin2019}, and bound extreme values over global attractors~\cite{Goluskin2018}. Some of these works refer to auxiliary functions as Lyapunov, Lyapunov-like, storage, or barrier functions, or as subsolutions to the Hamilton--Jacobi equation. Others do not use auxiliary functions explicitly but characterize nonlinear dynamics using invariant or occupation measures; the two approaches are related by Lagrangian duality and are equivalent in many cases. Furthermore, many proofs about differential equations that rely on monotone quantities can be viewed as special cases of various auxiliary function methods. For instance, as we explain in \cref{ex:fractional-burgers}, the bounds on transient growth in fluid systems proved in~\cite{Ayala2011,Ayala2014} fit within the general framework described here. Similarly, the ``background method'' introduced in~\cite{Doering1992} to bound infinite-time averages in fluid dynamics is equivalent to using quadratic auxiliary functions in a different framework~\cite{Chernyshenko2017, Goluskin2019}.

In this paper, we describe how to use auxiliary functions to bound extreme values among nonlinear ODE or PDE trajectories starting from a specified set of initial conditions. Precisely, any differentiable auxiliary function satisfying two inequalities given in \cref{s:bounds-with-afs} provides an \textit{a priori} upper bound on $\maxphi$, without any trajectories being known. {In the field of PDE analysis, these inequality conditions have been used implicitly to bound extreme events (e.g.,~\cite{Ayala2011,Ayala2014}), but the unifying framework we describe often has gone unrecognized. In the field of control theory, generalizations of our framework appear as convex relaxations of deterministic optimal control problems (e.g.,~\cite{Vinter1978,Vinter1978a,Lewis1980,Vinter1993}) and of stochastic optimal stopping problems~\cite{Cho2002}. In these works, constraints on auxiliary functions are deduced using convex duality after replacing the maximization of $\Phi$ over trajectories with a convex maximization over occupation measures. Here we derive the same constraints using elementary calculus, and we illustrate their application using numerous ODE examples and one PDE example.} 

Unlike the maximization over trajectories that defines $\maxphi$, seeking the smallest upper bound among all admissible auxiliary functions defines a convex minimization problem. In general these two optimization problems are weakly dual: the minimum is an upper bound on the maximum but may not be equal to it. In some cases they are strongly dual, meaning that the maximum over trajectories coincides with the minimum over auxiliary functions, and these functions act as Lagrange multipliers that enforce the dynamics when maximizing $\Phi$ over trajectories. In such cases there exist auxiliary functions giving arbitrarily sharp upper bounds on $\maxphi$. Strong duality holds for a large class of sufficiently regular ODEs where the maximum of $\Phi$ is taken over a finite time horizon. {This strong duality has been proved for a more general class of optimal control problems using measure theory and convex duality~\cite{Lewis1980}, and \cref{s:direct-proof-strong-duality} gives a simpler proof for our present context that shows existence of near-optimal auxiliary functions using a mollification argument similar to~\cite{Hernandez1996}.}

In many practical applications, constructing auxiliary functions that yield explicit upper bounds on $\maxphi$ is difficult regardless of whether strong duality holds. We illustrate various constructions here but do not have an approach that works universally. However, in the important case of dynamical systems governed by polynomial ODEs, polynomial auxiliary functions can be constructed using computational methods for polynomial optimization. With an infinite time horizon, this approach is applicable if the only invariant trajectories are algebraic sets, which is always true of steady states and is occasionally true of periodic orbits. With a finite time horizon, there is no such restriction. Polynomial ODEs are computationally tractable because the inequality constraints on auxiliary functions amount to nonnegativity conditions on certain polynomials. Polynomial nonnegativity is NP-hard to decide~\cite{Murty1987} but can be replaced by the stronger constraint that the polynomial is representable as a sum of squares (SOS). Optimization problems subject to SOS constraints can be reformulated as semidefinite programs (SDPs)~\cite{Nesterov2000, Lasserre2001, Parrilo2003} and solved using algorithms with polynomial-time complexity~\cite{Vandenberghe1996}. Thus, one can minimize upper bounds on $\maxphi$ for polynomial ODEs by numerically solving SOS optimization problems. Moreover, we prove that bounds computed with SOS methods becomes sharp as the degree of the polynomial auxiliary function is raised, provided that the time horizon is finite, certain compactness properties hold, and the minimization over general auxiliary functions is strongly dual to the maximization of $\Phi$ over trajectories. We illustrate the computation of very sharp bounds using SOS methods for several ODE examples, including a 16-dimensional system.

In addition to methods for bounding $\maxphi$ above, we describe a way to locate trajectories on which the observable $\Phi$ attains its maximum value of $\maxphi$. Specifically, auxiliary functions that prove sharp or nearly sharp upper bounds on $\maxphi$ can be used to define regions in state space where each such trajectory must lie prior to its extreme event. We illustrate this using an ODE for which nearly optimal polynomial auxiliary functions can be computed by SOS methods.

The rest of this paper is organized as follows. \Cref{s:bounds-with-afs} explains how auxiliary functions can be used to bound the magnitudes of extreme events in nonlinear dynamical systems. We construct bounds in several ODE examples and one PDE example; some but not all of these bounds are sharp. \Cref{s:optimal-trajectories} explains how auxiliary functions can be used to locate trajectories leading to extreme events. \Cref{s:sos-optimization} describes {how polynomial optimization can be used to construct auxiliary functions computationally} for polynomial ODEs. {Bounds computed in this way for various ODE examples appear in that section and others.} \Cref{s:extensions} extends the framework to give bounds on extreme values at particular times or integrated over time, rather than maximized over time, {giving a more direct derivation of bounding conditions that have appeared in~\cite{Vinter1978,Vinter1978a,Lewis1980,Vinter1993}.} Conclusions and open questions are offered in \cref{s:conclusion}. Appendices contain details of calculations and an alternative proof of the strong duality result that follows from~\cite{Lewis1980}.

\section{Bounds using auxiliary functions}
\label{s:bounds-with-afs}

Consider a dynamical system on a Banach space $\X$ that is governed by the differential equation
\begin{equation}
	\label{e:system}
	\dot{x} = F(t,x), \quad x(t_0) = x_0.
\end{equation}
Here, $F:\R \times \X \to \X$ is continuous and possibly nonlinear, the initial time $t_0$ and initial condition $x_0$ are given, and $\dot x$ denotes $\partial_t x$. When $\X = \R^n$, \cref{e:system} defines an $n$-dimensional system of ODEs. When $\X$ is a function space and $F$ a differential operator, \cref{e:system} defines a parabolic PDE, which may be considered in either strong or weak form~\cite{Temam1997, Robinson2001}. The trajectory of~\cref{e:system} that passes through the point $y \in \X$ at time $s$ is denoted by $x(t \given s,y)$. We assume that, for every choice of $(s,y) \in \R \times \X$, this trajectory exists uniquely on an open time interval, which can depend on both $s$ and $y$ and might be unbounded.

Suppose that $\Phi : \R \times \mathcal{X} \to \R$ is a continuous function that describes a quantity of interest for system~\cref{e:system}. Let $\Phi^*$ denote the largest value attained by $\Phi[t,x(t\given t_0, x_0)]$ among all trajectories that start from a prescribed set $X_0 \subset \X$ and evolve forward over a closed time interval $\mathcal T$ that is either finite, $\mathcal T=[t_0,T]$, or infinite, $\mathcal T=[t_0,\infty)$:
\begin{equation}
	\label{e:maxphi}
	\maxphi := \sup_{\substack{x_0 \in X_0\\[2pt]t\in\mathcal T}} \Phi\!\left[ t, x(t \given t_0, x_0) \right].
\end{equation}
We write $\maxphiT$ and $\maxphiinf$ instead of $\maxphi$ when necessary to distinguish between finite and infinite time horizons.
Our objective is to bound $\Phi^*$ from above without knowing trajectories of~\cref{e:system}.

Let $\Omega \subset \T \times \X$ be a region of spacetime in which the graphs $(t,x(t\given t_0,x_0))$ of all trajectories starting from $X_0$ remain up to the time horizon of interest. In applications one may be able to identify a set $\Omega$ that is strictly smaller than $\T \times \X$, otherwise it suffices to choose $\Omega=\T \times \X$. The maximum~\cref{e:maxphi} that we aim to bound depends only on trajectories within~$\Omega$.

To derive upper bounds on $\maxphi$ we employ auxiliary functions $V:\Omega\to\R$. In most cases we require $V$ to be differentiable along trajectories of~\cref{e:system}, so that its Lie derivative
\begin{equation}
	\mathcal{L} V(s,y) := \lim_{\varepsilon \to 0} \frac{V \left[s+\varepsilon, x(s+\varepsilon \given s,y)\right] - V(s,y)}{\varepsilon}
\end{equation}
is well defined. By design the function $\mathcal LV:\Omega\to\R$ coincides with the rate of change of $V$ along trajectories, meaning $\ddt V(t,x(t))=\mathcal LV(t,x(t))$ if $x(t)$ solves~\cref{e:system} and all derivatives exist. Crucially, an expression for $\mathcal LV$ can be derived without knowing the trajectories. In practice one differentiates $V[t,x(t\given s,y)]$ with respect to $t$ and uses the differential equation~\cref{e:system}. For example, when $\X = \R^n$ and~\cref{e:system} is a system of ODEs, the chain rule gives
\begin{equation}
	\label{e:LV-odes}
	\mathcal{L} V(t,x) =  \partial_t V(t,x) +  F(t,x)\cdot \nabla_x V(t,x).
\end{equation} 

{\Cref{ss:framework} presents inequality constraints on $V$ and $\mathcal L V$ that imply upper bounds on $\Phi^*$, as well as a convex framework for optimizing these bounds. Both can be obtained as particular cases of a general relaxation framework for optimal control problems~\cite{Vinter1978,Vinter1978a,Lewis1980}, but we give an elementary derivation.} \Cref{ss:global-local} compares bounds obtained when $\Omega=\T \times \X$, meaning that the constraints on $V$ are imposed globally in spacetime, to bounds obtained when a strictly smaller $\Omega$ containing all relevant trajectories can be found. {Finally, \cref{ss:sharpness} discusses conditions under which arbitrarily sharp upper bounds on $\Phi^*$ can be proved.}

\subsection{Bounding framework}
\label{ss:framework}

Assume that for each initial condition $x_0 \in X_0$ a trajectory $x(t\given t_0, x_0)$ exists on some open time interval where it is unique and absolutely continuous. This does not preclude trajectories that are unbounded in infinite or finite time. To bound $\maxphi$ we define a class $\V(\Omega)$ of admissible auxiliary functions as the subset of all differentiable functions, $C^1(\Omega)$, that do not increase along trajectories and bound $\Phi$ from above pointwise. Precisely, $V \in \V(\Omega)$ if and only if
\begin{subequations}
	\label{e:V-conditions}
	\begin{align}
		\label{e:cond1}
		\mathcal{L}V(t,x) &\leq 0 \quad\forall (t,x) \in \Omega,\\
		\label{e:cond2}
		\Phi(t,x) - V(t, x) &\leq 0  \quad\forall (t,x) \in \Omega.
	\end{align}
\end{subequations}
The system dynamics enter only in the derivation of $\mathcal{L}V$; conditions~(\ref{e:V-conditions}a,b) are imposed pointwise in the spacetime domain $\Omega$ and can be verified without knowing any trajectories. 
If $\Omega = \T\times \X$ we call $V$ a \emph{global} auxiliary function, otherwise it is \emph{local} on a smaller chosen $\Omega$.

We claim that
\begin{equation}
	\label{e:weak-duality}
	\maxphi \leq \adjustlimits \inf_{V \in \V(\Omega)}\sup_{x_0 \in X_0} V(t_0,x_0),
\end{equation}
with the convention that the righthand side is $+\infty$ if $\V(\Omega)$ is empty. To see that \cref{e:weak-duality} holds when $\V$ is not empty, consider fixed $V\in\V(\Omega)$ and $x_0\in X_0$. For any $t\geq t_0$ up to which the trajectory $x(t\given t_0,x_0)$ exists and is absolutely continuous, the fundamental theorem of calculus can be combined with~(\ref{e:V-conditions}a,b) to find
\begin{align}
	\label{e:inequality-sequence}
	\Phi[t, x(t \given t_0, x_0)] 
	&\leq V[t, x(t \given t_0, x_0)]\\\nonumber
	&= V(t_0,x_0) + \int_{t_0}^t \mathcal{L}V[\xi ,x(\xi \given t_0, x_0)] \dxi\\\nonumber
	&\leq V(t_0,x_0).
\end{align}
Thus, the existence of any $V \in \V(\Omega)$ implies that $\Phi[t, x(t \given t_0, x_0)]$ is bounded uniformly on $\T$ for each $x_0$. Conversely, if $\Phi$ blows up before the chosen time horizon for any $x_0\in X_0$, then no auxiliary functions exist. Maximizing both sides of \cref{e:inequality-sequence} over $t\in\T$ and $x_0\in X_0$ gives
\begin{equation}
	\label{e:weak-duality-incomplete}
	\maxphi \leq \sup_{x_0 \in X_0} V(t_0,x_0),
\end{equation}
and then minimizing over $\V(\Omega)$ gives \cref{e:weak-duality} as claimed.

The minimization problem on the righthand side of~\cref{e:weak-duality} is convex and gives a bound on the (generally non-convex) maximization problem defining $\maxphi$ in~\cref{e:maxphi}. Despite convexity of the minimization, it usually is difficult to construct an optimal or near-optimal auxiliary function, even with computer assistance. Nevertheless, any auxiliary function satisfying~(\ref{e:V-conditions}a,b) gives a rigorous upper bound on $\maxphi$ according to~\cref{e:weak-duality-incomplete}. This framework therefore can be useful for analysis, and sometimes for computation, even when the dynamics are very complicated. Analytically, one often can find a suboptimal auxiliary function that yields fairly good bounds. Computationally, for certain systems including polynomial ODEs, one can optimize $V$ over a finite-dimensional subset of $\V(\Omega)$ to obtain bounds that are very good and sometimes perfect. However, the inequality in~\cref{e:weak-duality} is strict in general, meaning that there are cases where the optimal bounds provable using conditions~(\ref{e:V-conditions}a,b) are not sharp. Local auxiliary functions can sometimes produce sharp bounds when global ones fail, although this depends on the spacetime set $\Omega$ inside which the graphs of trajectories are known to remain. This is illustrated by examples in \cref{ss:global-local}, while \cref{ss:sharpness} discusses sufficient conditions for bounds from auxiliary functions to be arbitrarily sharp. First, however, we present two examples where global auxiliary functions work well.

\Cref{ex:nonautonomous-example-sos} concerns a simple ODE where the optimal upper bound~\cref{e:weak-duality} produced by global $V$ appears to be sharp. We conclude this by constructing $V$ increasingly near to optimal, obtaining bounds that are extremely close to $\maxphi$. These $V$ are constructed computationally using polynomial optimization methods, the explanation of which is postponed until \cref{s:sos-optimization}. \Cref{ex:fractional-burgers} proves bounds for the Burgers equation with ordinary and fractional diffusion. We analytically construct $V$ giving bounds that are finite, but unlikely to be sharp. The bounds for fractional diffusion are novel, while those for ordinary diffusion show that the proof of the same result in~\cite{Ayala2011} can be seen as an instance of {the} auxiliary function framework.

\begin{example}
	\label{ex:nonautonomous-example-sos}
	\belowpdfbookmark{Example~\ref{ex:nonautonomous-example-sos}}{bookmark:nonautomonous-sos} 
	
	Consider the nonautonomous ODE system
	\begin{equation}
		\label{e:nonautomonous-system-example}
		\begin{bmatrix}
			\dot{x}_1 \\ \dot{x}_2
		\end{bmatrix} 
		= 
		\begin{bmatrix}
			x_2 t -0.1 x_1 -x_1 x_2\\ -x_1 t -x_2 +x_1^2
		\end{bmatrix}.
	\end{equation} 
	All trajectories eventually approach the origin, but various quantities can grow transiently. For example, consider the maximum of $\Phi = x_1$ over an infinite time horizon. Let the initial time be $t_0=0$ and the set of initial conditions $X_0$ contain only the point $x_0=(0,1)$. Then, $\maxphiinf$ is the largest value of $x_1$ along the trajectory with $x(0)=(0,1)$, and it is easy to find by numerical integration. Doing so gives $\maxphi\approx 0.30056373$, and this value can be used to judge the sharpness of upper bounds on $\maxphiinf$ that we produce using global auxiliary functions.
	
	The quadratic polynomial
	\begin{equation}
		\label{e:nonautomonous-system-example-V}
		V(t,x) = \tfrac12 \left( 1 + x_1^2 + x_2^2\right)
	\end{equation}
	is an admissible global auxiliary function, meaning that it satisfies the inequalities~(\ref{e:V-conditions}a,b) on $\Omega=[0,\infty)\times\R^2$. For this $V$ and the chosen $X_0$ and $t_0$, the bound \cref{e:weak-duality-incomplete} yields
	\begin{equation}
		\maxphiinf \leq V(0,x_0) = 1.
	\end{equation}
	This is the best bound that can be proved using global quadratic $V$, as shown in \cref{app:nonautomonous-system-example-optimality}, but optimizing polynomial $V$ of higher degree produces better results. For instance, the best global quartic $V$ that can be constructed using polynomial optimization is
	\begin{multline}
		V(t,x)=
		0.2353
		+0.7731\,x_1^2
		+0.1666\,x_1 x_2
		+0.4589\,x_2^2
		+0.5416\,x_1^3
		+0.05008\,t x_1^2\\
		+0.1616\,t x_1 x_2
		+0.2505\,t x_2^2
		-0.1058\,x_1^2 x_2
		+0.1730\,x_1 x_2^2
		-0.5766\,x_2^3\\
		+0.2962\,x_1^4
		+0.1888\,t^2 x_1^2
		+0.1888\,t^2 x_2^2
		+0.5923\,x_1^2 x_2^2
		+0.2962\,x_2^4,
	\end{multline}
	where numerical coefficients have been rounded. The bound on $\maxphiinf$ that follows from the above $V$ is reported in \cref{table:results-nonautonomous-example}, along with bounds that follow from computationally optimized $V$ of polynomial degrees 6, 8, and 10 (omitted for brevity). The bounds improve as the degree of $V$ is raised, and the optimal degree-8 bound is sharp up to nine significant figures. The numerical approach used for such computations is described in \cref{s:sos-optimization}.
	
	\begin{table}[t]
		\caption{Upper bounds on $\maxphiinf$ for \cref{ex:nonautonomous-example-sos}, computed using polynomial optimization with $V$ of various polynomial degrees. For the single initial condition $x_0=(0,1)$, numerical integration gives $\maxphi\approx0.30056373$ for all time horizons larger than $T=1.6635$, which agrees with the degree-8 bound to the tabulated precision. For the set $X_0$ of initial conditions on the shifted unit circle with center $(-\tfrac34,0)$, nonlinear optimization of the initial angular coordinate yields $\maxphiinf\approx0.49313719$, which agrees with the degree-10 bound to the tabulated precision.}
		\label{table:results-nonautonomous-example}
		\centering
		\small
		\begin{tabular}{cc ccc}
			\toprule
			&& \multicolumn{3}{c}{Upper bounds} 	 \\[2pt]
			\cline{3-5}\\[-8pt]
			$\deg(V)$	&& $X_0=\{(0,1)\}$ && $X_0$ circle \\[2pt]
			\hline
			2   	    	&& 1\phantom{.00000000}	&& 1.75\phantom{000000} \\
			4		&& 0.41381042			&& 0.80537235 \\
			6		&& 0.30056854			&& 0.49808038 \\
			8		&& 0.30056373			&& 0.49313760 \\ 
			10		&& ''					&& 0.49313719 \\
			\bottomrule
		\end{tabular}
	\end{table}
	
	\begin{figure}
		\centering
		\includegraphics[scale=1]{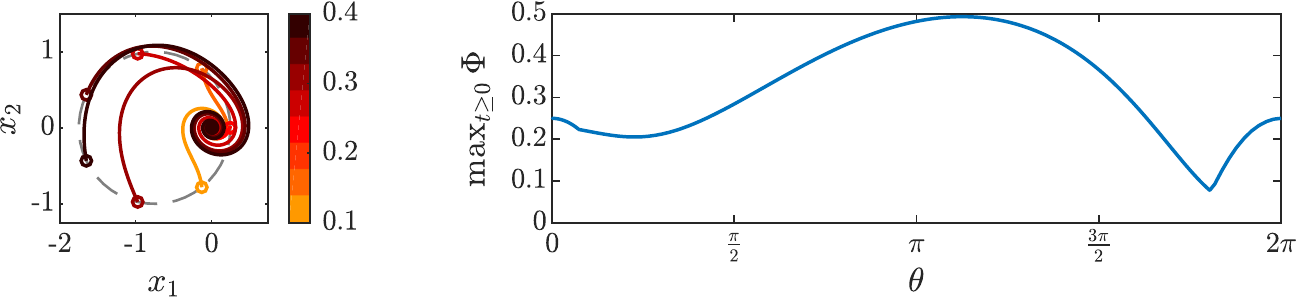}
		\begin{tikzpicture}[overlay]
		\node at (-12.4,2.65) {\footnotesize(a)};
		\node at (-7.4,2.65) {\footnotesize(b)};
		\end{tikzpicture}
		\vspace{-2ex}
		\caption{(a) Sample trajectories starting from the circle with center $(-\tfrac34,0)$ and unit radius ({\color{matlabgray}\dashedrule}). The initial conditions are marked with a circle, while the color scale reflects the maximum value of $\Phi$ along each trajectory. (b) Numerical approximation to the maximum of $\Phi$ along single trajectories with initial condition on the shifted unit circle $(\cos\theta-\tfrac34,\sin\theta)$ as a function of the angular coordinate~$\theta$.}
		\label{f:nonautonomous-figure}
	\end{figure}
	
	Unlike searching among particular trajectories, bounding $\maxphi$ from above is not more difficult when the set $X_0$ of initial conditions is larger than a single point. For example, consider initial conditions on the shifted unit circle centered at $(-\tfrac34,0)$,
	\begin{equation}
		X_0 
		= \left\{(x_1,x_2):\,\left(x_1+\tfrac34 \right)^2+x_2^2 = 1\right\} 
		= \Big\{\left(\cos \theta - \tfrac34,\sin\theta \right):\;\theta \in [0,2\pi)\Big\}.
	\end{equation}
	Sample trajectories and the variation of $\max_{t\ge0}\Phi$ with the angular position $\theta$ in $X_0$ are shown in \cref{f:nonautonomous-figure}. Finding the trajectory that attains $\maxphi$ requires numerical integration, combined with nonlinear optimization over initial conditions in $X_0$. Starting MATLAB's optimizer \texttt{fmincon} from initial guesses with angular coordinate $\theta=\tfrac{3\pi}{4}$ and $\theta=\tfrac{\pi}{10}$ yields locally optimal initial conditions of $\theta\approx1.125\pi$ and $\theta=2\pi$, which lead to $\Phi$ values of 0.49313719 and 0.25, respectively. \Cref{f:nonautonomous-figure}(b) confirms that the former initial condition is globally optimal, meaning $\maxphi\approx0.49313719$.
	On the other hand, polynomial auxiliary functions can be optimized by the methods of \cref{s:sos-optimization} using exactly the same algorithms as when $X_0$ contains a single point. For initial conditions on the shifted unit circle $X_0$, \cref{table:results-nonautonomous-example} lists upper bounds on $\maxphi$ implied by numerically optimized polynomial $V$ of degrees up to 10. We omit the computed $V$ for brevity. The optimal degree-10 $V$ gives a bound that is sharp to eight significant figures.	
	\markendexample\end{example}

\begin{example}
	\label{ex:fractional-burgers}
	\belowpdfbookmark{Example~\ref{ex:fractional-burgers}}{bookmark:fractional-burgers}
	
	To illustrate the analytical use of global auxiliary functions for PDEs, we consider mean-zero period-1 solutions $u(t,x)$ of the Burgers equation with fractional diffusion,
	\begin{equation}
		\label{e:fractional-burgers}
		\begin{gathered}
			\dot u = - u u_x - (-\Delta)^\alpha u, \\
			u(0,x) = u_0(x),\quad
			u(t,x+1) = u(t,x),\quad
			\int_0^1 u(t,x) \dx = 0.
		\end{gathered}
	\end{equation}
	Following standard PDE notation, in this example the state variable in $\X$ is denoted by $u(t,\cdot)$, whereas $x\in[0,1]$ is the spatial variable. Discussion of this equation and a definition of the fractional Laplacian $(-\Delta)^\alpha$ can be found in~\cite{Yun2018}. Ordinary diffusion is recovered when $\alpha=1$. For each $\alpha\in(\tfrac12,1]$, solutions exist and remain bounded when the Banach space $\X$ in which solutions evolve is the Sobolev space $H^s$ with $s>\tfrac32-2\alpha$~\cite{Kiselev2008}. 	Let us consider a quantity that is called fractional enstrophy in~\cite{Yun2018},
	\begin{equation}
		\Phi(u) := \frac12 \int_0^1 \left[(-\Delta)^{\frac{\alpha}{2}} u\right]^2 \dx.
	\end{equation}
	We aim to bound $\maxphi_\infty$ among trajectories whose initial conditions $u_0$ have a specified value $\Phi_0$ of fractional enstrophy, so the set of initial conditions is
	\begin{equation}
		X_0=\left\{ u\in\X :\,\Phi(u)=\Phi_0 \right\}.
	\end{equation}
	
	Here we prove $\Phi_0$-dependent upper bounds on $\maxphiinf$ for $\alpha\in(\tfrac34,1]$. Such bounds have been reported for ordinary diffusion ($\alpha=1$) \cite{Ayala2011} but not for $\alpha<1$. We employ global auxiliary functions of the form
	\begin{equation}
		V(u) = \left[ \Phi(u)^\beta + C \|u\|_2^2 \right]^{1/\beta},
		\label{e:burgers-V-ansatz}
	\end{equation}
	where $\|u\|_2^2 = \int_0^1 u^2 \dx$ and the constants $\beta,C>0$ are to be chosen. This ansatz is guided by the realization that the analysis of the $\alpha=1$ case~\cite{Ayala2011} is equivalent to {the} auxiliary function framework with $\beta=1/3$ in~\cref{e:burgers-V-ansatz}. 
	
	To be an admissible auxiliary function, $V$ must satisfy~(\ref{e:V-conditions}a,b). The inequality $V(u) \geq \Phi(u)$ holds for every positive $C$, while the inequality $\mathcal{L}V(u)\le 0$ constrains $\beta$ and $C$.
	To derive an expression for $\mathcal{L}V(u)$ we first note that differentiating along trajectories of \cref{e:fractional-burgers} and integrating by parts gives
	\begin{subequations}
		\label{e:fractional-burgers-KE}
		\begin{gather}
			\ddt \|u(t,\cdot)\|_2^2 = - 4  \Phi[u(t,\cdot)],\\[1ex]
			\ddt \Phi[u(t,\cdot)] =R[u(t,\cdot)] := - \int_0^1 [(-\Delta)^\alpha u]^2 \dx - \int_0^1 u u_x (-\Delta)^\alpha u \dx.
		\end{gather}
	\end{subequations}
	Differentiating $V[u(t,\cdot)]$ in time thus gives
	\begin{equation}
		\label{e:fractional-burgers-LV}
		\mathcal{L}V(u) = \frac{1}{\beta}\left[ \Phi(u)^\beta + C \|u\|_2^2 \right]^{ \frac{1}{\beta} -1 } \left[ \beta \Phi(u)^{\beta-1} R(u) -  4 C \Phi(u) \right].
	\end{equation}
	The sign of $\mathcal L V$ is that of the expression in the rightmost brackets, so an estimate for $R(u)$ is needed. Theorem 2.2 in~\cite{Yun2018} provides $R(u) \leq \sigma_\alpha \Phi(u)^{\gamma_\alpha}$, with $\gamma_\alpha=\tfrac{8\alpha-3}{6\alpha-3}$ and explicit prefactors $\sigma_\alpha$ that blow up as $\alpha\to\tfrac34^+$. By fixing $\beta=2-\gamma_\alpha$ and $C = (2-\gamma_\alpha) \sigma_\alpha/4$, we guarantee that \cref{e:fractional-burgers-LV} is nonpositive. Thus, $V$ is a global auxiliary function yielding the bound
	\begin{equation}
		\maxphiinf
		\leq \sup_{u_0 \in X_0} \left[ \Phi_0^{2-\gamma_\alpha} + \frac{(2-\gamma_\alpha) \sigma_\alpha}{4} \, \|u_0\|_2^2 \right]^{ \frac{1}{2-\gamma_\alpha} }
	\end{equation}
	according to~\cref{e:weak-duality-incomplete}. Finally, the righthand maximization over $u_0$ can be carried out analytically by calculus of variations to bound $\maxphiinf$ in terms of only the initial fractional enstrophy $\Phi_0$,
	\begin{equation}
		\label{e:fractional-burgers-bound}
		\maxphiinf \leq \left[ \Phi_0^{2-\gamma_\alpha} + \frac{(2-\gamma_\alpha) \sigma_\alpha}{2 (2\pi)^{2\alpha}} \, \Phi_0 \right]^{ \frac{1}{2-\gamma_\alpha} }.
	\end{equation}

	The bound~\cref{e:fractional-burgers-bound} is finite for every $\alpha \in (\frac34,1]$. The coefficient on $\Phi_0$ is bounded uniformly for $\alpha$ in this range, but the  exponent $\tfrac{1}{2-\gamma_\alpha}$  blows up as $\alpha\to\tfrac34^+$. When $\alpha=1$ we can replace $\sigma_\alpha$ with a smaller prefactor from~\cite{Lu2008} to find
	\begin{equation}
		\label{e:ordinary-burgers-bound}
		\maxphiinf \leq \left( \Phi_0^{1/3} + 2^{-10/3}\pi^{-8/3} \, \Phi_0 \right)^3.
	\end{equation}
	The above estimate is identical to the result of~\cite{Ayala2011},\footnote{Expression (5) in~\cite{Ayala2011} is claimed to hold with $\mathcal E$ being identical to our $\Phi(u)$, but in fact it holds with $\mathcal E=2\Phi(u)$ because their derivation uses estimate (3.7) from~\cite{Lu2008}. With this correction, and with $L=1$ and $\nu=1$, the expression in~\cite{Ayala2011} agrees with our bound~\cref{e:ordinary-burgers-bound}.}
	and their argument is equivalent to ours in that it implicitly relies on our $V$ being nonincreasing along trajectories. Similarly, in~\cite{Ayala2014} the same authors bound a quantity called palinstrophy in the two-dimensional Navier--Stokes equations, and that proof can be seen as using (in their notation) the global auxiliary function $V(u) = \left[ \mathcal P(u)^{1/2} + (4\pi\nu^2)^{-2}\mathcal K(u)^{1/2} \mathcal E(u) \right]^2$.
	
	The bound~\cref{e:fractional-burgers-bound} is unlikely to be sharp. For $\alpha=1$ it scales like $\maxphiinf\leq \mathcal{O}\big(\Phi_0^3\big)$ when $\Phi_0\gg1$, whereas numerical and asymptotic evidence suggests that $\maxphiinf = \mathcal{O}\big(\Phi_0^{3/2}\big)$~\cite{Ayala2011,Pelinovsky2012}. It is an open question whether going beyond the $V$ ansatz \cref{e:burgers-V-ansatz} can produce sharper analytical bounds, and whether the optimal bound \cref{e:weak-duality} that can be proved using global auxiliary functions would be sharp in this case.	
	\markendexample\end{example}

\subsection{Global versus local auxiliary functions}
\label{ss:global-local}

In various cases, such as \cref{ex:nonautonomous-example-sos} above, global auxiliary functions can produce arbitrarily sharp upper bounds on $\maxphi$. Other times they cannot. In \cref{ex:ex-1d-unbounded-trajectories} below, global auxiliary functions give bounds that are finite but not sharp. In \cref{ex:ex-infeasible-bounded-phi}, no global auxiliary functions exist. Sharp bounds can be recovered in both examples by using local auxiliary functions, meaning that we enforce constraints~(\ref{e:V-conditions}a,b) only on a subset $\Omega \subsetneq \T\times \X$ of spacetime that contains all trajectories of interest.

There are various ways to determine that trajectories starting from the initial set $X_0$ remain in a spacetime set $\Omega$ during the time interval $\T$. One option is to choose a function $\Psi(t,x)$ and use global auxiliary functions to show that $\Psi^*\le B$ for initial conditions in $X_0$. This implies that trajectories starting from $X_0$ remain in the set
\begin{equation}
	\Omega := \lbrace (t,x)\in\T\times\X:\,\Psi(t,x)\leq B \rbrace.
\end{equation}
Any $\Psi$ that can be bounded using global auxiliary functions can be used, including $\Psi=\Phi$, and $\Omega$ can be refined by considering more than one $\Psi$. Another way to show that trajectories never exit a prescribed set $\Omega$ is to construct a barrier function that is nonpositive on $\{t_0\}\times X_0$, positive outside $\Omega$, and whose zero level set cannot be crossed by trajectories. Barrier functions can be constructed analytically in some cases, and computationally for ODEs with polynomial righthand sides; see~\cite{Prajna2007,Ahmadi2017} and references therein. Finally, in the polynomial ODE case the computational methods of \cite{Henrion2014} can produce a spacetime set $\Omega=\T \times X$, where $X \subsetneq \X$ is an outer approximation for the evolution of the initial set $X_0$ over the time interval $\T$. The next two examples demonstrate the differences between global and local auxiliary functions for a simple ODE where a suitable choice of $\Omega$ is apparent.

\begin{example}
	\label{ex:ex-1d-unbounded-trajectories}
	\belowpdfbookmark{Example~\ref{ex:ex-1d-unbounded-trajectories}}{bookmark:1d-unbounded-trajectories}
	
	Consider the autonomous one-dimensional ODE
	\begin{equation}
		\label{e:xdot=x2}
		\dot{x} = x^2, \qquad x(0)=x_0.
	\end{equation}
	Trajectories $x(t) = x_0/(1-x_0 t)$ with nonzero initial conditions grow monotonically. If $x_0<0$, then $x(t)\to0$ as $t\to\infty$; if $x_0>0$, then $x(t)$ blows up at the critical time $t=1/x_0$. Suppose the set of initial conditions $X_0$ includes only a single point $x_0$, the time interval is $\T=[0,\infty)$, and the quantity to be bounded is
	\begin{equation}
		\label{e:phi-ode-blowup}
		\Phi(x) = \frac{4x}{1+4x^2}.
	\end{equation}
	Since $|\Phi(x)|\le1$ uniformly, $\maxphiinf$ is finite for each $x_0$ despite the blowup of trajectories starting from positive initial conditions. Explicit solutions give
	\begin{equation}
		\label{e:maxphi-example-unbounded-trajectories}
		\maxphiinf = \begin{cases}
			0, & \phantom{0< }\,\,x_0 \leq 0,\\
			1, & 0 < x_0 \leq\tfrac12,\\
			\displaystyle\frac{4x_0}{1+4 x_0^2}, &\phantom{0<}\,\,x_0>\tfrac12.
		\end{cases}
	\end{equation}
	
	Here $X_0$ contains only one initial condition, so the optimal bound~\cref{e:weak-duality} simplifies to
	\begin{equation}
		\label{e:weak-duality-blowup-example}
		\maxphiinf \leq \inf_{V \in \V(\Omega)} V(0,x_0).
	\end{equation}
	The constant function $V\equiv1$ belongs to $\V$ for each $x_0$ and implies the trivial bound $\maxphiinf \leq 1$, which is sharp for $x_0\in(0,1/2]$. For all other $x_0\neq0$ there exist different $V$ providing sharp bounds on $\maxphiinf$, regardless of whether the domain $\Omega$ of auxiliary functions is global or local. This is shown in \cref{app:sharp-bounds-ex-x2}. At the semistable point $x_0=0$, however, sharp bounds are possible only with local auxiliary functions on certain $\Omega$.
	
	In the $x_0=0$ case, the resulting trajectory is simply $x(t)\equiv0$. Thus it suffices to enforce the auxiliary function constraints (\ref{e:V-conditions}a,b) locally on $\Omega=[0,\infty) \times \{0\}$. On this $\Omega$, the constant function $V\equiv0$ is a local auxiliary function giving the sharp bound $\maxphi\le0$. In fact, the same is true with $\Omega = [0,\infty) \times X$ for any $X$ with $0\in X \subseteq (-\infty,0]$. On the other hand, if the chosen set $X$ contains any open neighborhood of 0, then sharp bounds are not possible. This is true in particular for global auxiliary functions, which must satisfy constraints (\ref{e:V-conditions}a,b) on $\Omega=[0,\infty)\times\R$. The righthand minimum in~\cref{e:weak-duality-blowup-example} over global auxiliary functions is attained by the constant function $V=1$. No better bound is possible with global $V$ because they must satisfy $V(0,0) \geq 1$. To prove this, recall that every $V(t,x)$ is continuous by definition. Thus for any $\delta > 0$ there exists $y>0$ such that $V(0,0) \geq V(0,y) - \delta$. The trajectory of~\cref{e:xdot=x2} with initial condition $x(0)=y$ blows up in finite time and must therefore pass through $x=\frac12$ at some time $t^*$. Condition~\cref{e:cond2} requires that $V(t^*,\frac12) \geq \Phi(\frac12) = 1$, while~\cref{e:cond1} implies that $V$ decays along trajectories, so
	\begin{equation}
		V(0,0) \geq V(0,y) - \delta \geq V(t^*,\tfrac12) - \delta \geq 1-\delta
	\end{equation} 
	for every $\delta>0$. Thus $V(0,0) \geq 1$, so when $x_0=0$ the righthand minimum over global $V$ in~\cref{e:weak-duality-blowup-example} is indeed attained by $V\equiv1$. Local auxiliary functions can prove better bounds, but a similar argument shows that the sharp bound $\maxphi\le0$ for $X_0=\{0\}$ is possible only if $0\in X \subseteq (-\infty,0]$. That is, the upper limit of $X$ must coincide with the boundary of the basin of attraction of the semistable point at 0. In more complicated systems it may not be possible to locate $X$ so precisely. In such cases, if global auxiliary functions do not give sharp bounds, local ones might not either, at least for spacetime sets $\Omega$ that one can identify in practice.	
	\markendexample\end{example}

\begin{example}
	\label{ex:ex-infeasible-bounded-phi}
	\belowpdfbookmark{Example~\ref{ex:ex-infeasible-bounded-phi}}{bookmark:infeasible-bounded-phi}
	
	In some cases, global auxiliary functions can fail to exist even if $\maxphi$ is finite. Again consider the ODE \cref{e:xdot=x2} from \cref{ex:ex-1d-unbounded-trajectories} with $\T=[0,\infty)$ and a single initial condition $X_0=\{x_0\}$, but now consider the quantity
	\begin{equation}
		\label{e:phi-no-global-V}
		\Phi(t,x) = x^2 {\rm e}^x.
	\end{equation}
	Recalling that $x(t)$ approaches zero if $x_0\le0$ and blows up otherwise, we find
	\begin{equation}
		\label{e:maxphi-example-x^2-exponential}
		\maxphiinf = \begin{cases}
			4 \, {\rm e}^{-2}, &\phantom{-2< }\,\,x_0\leq -2,\\
			x_0^2 \, {\rm e}^{x_0}, &-2<x_0\leq 0, \\
			\infty, & \phantom{-2<}\,\, x_0 > 0.
		\end{cases}
	\end{equation}
	For auxiliary functions satisfying~(\ref{e:V-conditions}a,b) globally on $\Omega=[0,\infty)\times\R$, $\V(\Omega)$ must be empty when $x_0>0$ since $\maxphiinf=\infty$. However, $\V(\Omega)$ is empty also when $x_0\le0$, despite $\maxphiinf$ being finite. This is because any global $V$ satisfying~(\ref{e:V-conditions}a,b) must be nonincreasing for trajectories starting at all $y\in\R$, not only for initial conditions in the set of interest $X_0$. In particular,
	\begin{equation}
		\label{e:empty-V-contradiction}
		V(0,y) \geq V\!\left[ t, x(t;0,y) \right] \geq \Phi\!\left[ t, x(t;0,y) \right] =  x(t;0,y)^2 \,{\rm e}^{x(t;0,y)}
	\end{equation}
	for all $y\in\R$ and all $t\ge0$, where the second inequality follows from~\cref{e:cond2}. No $V$ that is continuous on $[0,\infty)\times \R$ can satisfy~\cref{e:empty-V-contradiction} because, for each $y>0$, the rightmost expression becomes infinite as $t$ approaches the blowup time $1/x_0$. Thus, $\V(\Omega)$ is empty.
	
	Sharp bounds on finite $\maxphi$ become possible with local rather than global auxiliary functions, much as in \cref{ex:ex-1d-unbounded-trajectories}. Since $\maxphi$ is finite only when $X_0\subseteq(-\infty,0]$, and trajectories starting from any such $X_0$ stay within $X=(-\infty,0]$, conditions (\ref{e:V-conditions}a,b) can be enforced locally on $\Omega =[0,\infty) \times X$. As in \cref{ex:ex-1d-unbounded-trajectories}, it is crucial that $X$ contains no points outside the basin of the semistable equilibrium at the origin. A local $V$ giving sharp bounds is
	\begin{equation}
		V(t,x) = \begin{cases}
			4 \, {\rm e}^{-2}, &x\leq -2,\\
			x^2 \, {\rm e}^{x}, &x>-2.
		\end{cases}
	\end{equation}
	At each $x_0\le 0$ this $V$ is equal to the value~\cref{e:maxphi-example-x^2-exponential} of $\maxphiinf$ for the single trajectory starting at $x_0$. Thus, this $V$ gives a sharp bound on $\maxphiinf$ for every possible initial set $X_0\subseteq(-\infty,0]$.
	\markendexample\end{example}

\subsection{Sharpness of optimal bounds}
\label{ss:sharpness}
The best bounds on $\maxphi$ provable using auxiliary functions are often but not always sharp. 
\cref{ex:ex-1d-unbounded-trajectories,ex:ex-infeasible-bounded-phi} above show that the upper bound~\cref{e:weak-duality} can be strict, at least for infinite time horizons and global auxiliary functions. {For finite time horizons and local auxiliary functions, on the other hand, arguments in~\cite{Lewis1980} prove that \cref{e:weak-duality} is an equality provided trajectories remain in a compact set over the finite time interval of interest. \Cref{ss:finite-time-horizon} states this result and gives an explicit counterexample for infinite time horizons. \Cref{ss:discontinuous-afs} explains why sharp bounds are always possible if one allows $V$ to be discontinuous, a fact which is useful for theory but not for explicitly bounding quantities in particular systems.}

\subsubsection{Sharp bounds for ODEs with finite time horizon}
\label{ss:finite-time-horizon}
Local auxiliary functions can produce arbitrarily sharp bounds on $\maxphiT$ with finite time horizon $T$ for well posed ODEs, provided the initial set $X_0$ is compact and trajectories that start from it remain inside a compact set $X$ up to time $T$. {Precisely, Theorem 2.1 and equation (5.3) in~\cite{Lewis1980} imply the following result.}

\begin{theorem}[\cite{Lewis1980}]
	\label{th:strong-duality}
	Let $\dot{x} = F(t,x)$ be an ODE with $F$ locally Lipschitz in both arguments. Given $\Phi:\R \times \R^n \to \R$ continuous, an initial time $t_0$, a finite time interval $\T = [t_0,T]$, and a compact set of initial conditions $X_0$, define $\maxphiT$ as in~\cref{e:maxphi}. Assume that:
	\begin{enumerate}[({A}.1)]
		\item All trajectories starting from $X_0$ at time $t_0$ remain in a compact set $X$ for $t \in \T$;
		\item There exist a time $t_1 > T$ and a bounded open neighborhood $Y$ of $X$ such that, for all initial points $(s,y) \in [t_0,t_1] \times Y$, a unique trajectory $x(t \given s,y)$ exists for all $t \in [s,t_1]$.
	\end{enumerate}
	Then, letting $\V(\Omega)$ denote the set of differentiable auxiliary functions that satisfy~(\ref{e:V-conditions}a,b) on the compact set $\Omega := \T \times X$,
	\begin{equation}
		\maxphiT =  \adjustlimits \inf_{V \in \V(\Omega)}\sup_{x_0 \in X_0} V(t_0,x_0).
		\label{e:strong-duality}
	\end{equation}
\end{theorem}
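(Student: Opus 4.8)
Since the bound $\maxphiT \le \inf_{V\in\V(\Omega)}\sup_{x_0\in X_0}V(t_0,x_0)$ is the weak-duality statement \cref{e:weak-duality} already proved in \cref{ss:framework}, the plan is to establish only the reverse inequality, namely that for every $\varepsilon>0$ there is an admissible $V\in\V(\Omega)$ whose cost $\sup_{x_0\in X_0}V(t_0,x_0)$ exceeds $\maxphiT$ by at most $\mathcal O(\varepsilon)$. The natural, but generally nonsmooth, candidate is the \emph{value function}
\[
  W(s,y) := \sup_{t\in[s,T]} \Phi\!\left[t, x(t\given s,y)\right],
\]
defined on $\Omega=\T\times X$. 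Taking $t=s$ gives $W\ge\Phi$, so \cref{e:cond2} holds; the semigroup identity $x(t\given s,y)=x\big(t\given \tau,x(\tau\given s,y)\big)$ together with uniqueness shows that $t\mapsto W\big(t,x(t\given s,y)\big)$ is nonincreasing, which is the integrated form of \cref{e:cond1}; and evaluation at $s=t_0$ gives $\sup_{x_0\in X_0}W(t_0,x_0)=\maxphiT$ \emph{exactly}. Thus $W$ would be the perfect auxiliary function were it differentiable, and the whole difficulty is to regularize it without destroying the two inequalities or inflating the cost.

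I would do this in two smoothing steps, both relying on the compactness from (A.1) and the forward room from (A.2). First, since $\Phi$ is continuous on the compact set $\Omega$, pick a smooth (hence Lipschitz) $\Phi_\varepsilon$ with $\Phi\le\Phi_\varepsilon\le\Phi+\varepsilon$; its value function $W_\varepsilon$ then satisfies $W_\varepsilon\ge W\ge\Phi$ and $\sup_{X_0}W_\varepsilon(t_0,\cdot)\le\maxphiT+\varepsilon$. Crucially, $W_\varepsilon$ is now \emph{Lipschitz}: by (A.2) the flow $(s,y)\mapsto x(t\given s,y)$ is defined on a spacetime neighborhood of $\Omega$, and a Gronwall estimate using the local Lipschitz continuity of $F$ on the compact set makes this flow Lipschitz in $(s,y)$ uniformly in $t$, so the supremum defining $W_\varepsilon$ inherits the Lipschitz bound. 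To allow mollification across the terminal face, I would also extend $W_\varepsilon$ to the slab $t\in(T,T+\delta_0]$ by freezing it as a constant along trajectories; this continuation is Lipschitz, matches continuously at $t=T$, and keeps $W_\varepsilon$ nonincreasing along trajectories throughout $[t_0,T+\delta_0]$, so that $\mathcal{L}W_\varepsilon\le0$ almost everywhere there (the monotonicity along the bi-Lipschitz flow transfers to the pointwise inequality off a null set).

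Second, I would convolve $W_\varepsilon$ at scale $\delta$ with a mollifier $\rho_\delta$ supported in $\{r_0\ge0\}$, so that the averaging looks only forward in time and hence stays inside the domain guaranteed by (A.2) even at the faces $t=t_0$ and $t=T$. The resulting $W_{\varepsilon,\delta}$ is $C^\infty$, converges uniformly to $W_\varepsilon$, and differentiating under the integral gives
\begin{multline*}
  \mathcal{L}W_{\varepsilon,\delta}(t,x)
  = \int (\mathcal{L}W_\varepsilon)(t+r_0,x+r')\,\rho_\delta(r)\,{\rm d}r \\
  + \int \big[F(t,x)-F(t+r_0,x+r')\big]\cdot(\nabla_x W_\varepsilon)(t+r_0,x+r')\,\rho_\delta(r)\,{\rm d}r.
\end{multline*}
The first integral is nonpositive, and the commutator in the second is bounded by $L_F\,\delta\,\mathrm{Lip}(W_\varepsilon)=:C\delta$ because $F$ is Lipschitz and $\nabla_x W_\varepsilon$ is bounded; hence $\mathcal{L}W_{\varepsilon,\delta}\le C\delta$ uniformly on $\Omega$.

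Finally, I would absorb the two residual errors with elementary additive terms. Setting
\[
  V(t,x) := W_{\varepsilon,\delta}(t,x) + \omega_\varepsilon(\delta) + C\delta\,(t_1-t),
  \qquad \omega_\varepsilon(\delta):=\sup_\Omega\big|W_{\varepsilon,\delta}-W_\varepsilon\big|\xrightarrow[\delta\to0]{}0,
\]
makes $V$ smooth with $\mathcal{L}V=\mathcal{L}W_{\varepsilon,\delta}-C\delta\le0$, and, since $C\delta(t_1-t)\ge0$ on $\T$, with $V\ge W_{\varepsilon,\delta}+\omega_\varepsilon(\delta)\ge W_\varepsilon\ge\Phi$; thus $V\in\V(\Omega)$. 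Its cost obeys
\[
  \sup_{x_0\in X_0}V(t_0,x_0)\ \le\ \maxphiT + \varepsilon + 2\,\omega_\varepsilon(\delta) + C\delta\,(t_1-t_0),
\]
and letting $\delta\to0$ then $\varepsilon\to0$ drives the right-hand side to $\maxphiT$, yielding $\inf_{V\in\V(\Omega)}\sup_{X_0}V(t_0,\cdot)\le\maxphiT$ and, with weak duality, the equality \cref{e:strong-duality}. I expect the main obstacle to be the regularity bookkeeping rather than any isolated hard estimate: proving that $W_\varepsilon$ is genuinely Lipschitz and that $\mathcal{L}W_\varepsilon\le0$ holds almost everywhere in spacetime (not merely along individual trajectories), and arranging the terminal-face extension together with the one-sided mollification so that every average remains inside the domain supplied by (A.2). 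Once these are in place, the remainder reduces to Gronwall's inequality and the linear commutator bound.
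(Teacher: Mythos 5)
Your proposal is correct and follows essentially the same route as the paper's proof in \cref{s:direct-proof-strong-duality}: both replace $\Phi$ by a Lipschitz approximation, take the value function of that approximation (which is Lipschitz by Gronwall-type estimates on the flow guaranteed by (A.1)--(A.2)), mollify it \emph{forward} in time using the room past $T$ supplied by (A.2), and absorb the resulting $\mathcal{O}(\delta)$ defect in $\mathcal{L}V$ with an additive term linear in $t$. The only notable differences are technical: the paper bounds $\mathcal{L}$ of the mollification via difference quotients and the integrated monotonicity of its function $U$, avoiding your appeal to an almost-everywhere inequality $\mathcal{L}W_\varepsilon\le 0$ and Rademacher's theorem, and it extends the value function past $T$ by enlarging the supremum window to $[t,T+\gamma]$ (at an $\mathcal{O}(\gamma)$ cost it must then estimate) rather than by freezing along trajectories.
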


{In \cref{s:direct-proof-strong-duality} we give an alternative proof of this theorem that uses mollification to construct near-optimal $V$. This construction does not yield explicit bounds on $\maxphi_T$ for particular ODEs because it invokes trajectories, which generally are not known.} Both the original proof in~\cite{Lewis1980} and our proof rely on assumptions (A.1) and (A.2) to ensure that trajectories starting in a neighborhood of $X$ remain bounded past the time horizon $T$ and are regular in the sense that the map $(s,y) \mapsto x(t \given s,y)$ is locally Lipschitz on $[t_0,t_1] \times Y$. Regularity over a spacetime set slightly larger than $\Omega$ is used to construct smooth uniform approximations to certain functions on $\Omega$ via mollification. However, the assumptions are not necessary for the equality~\cref{e:strong-duality} to hold. For instance, the example in \cref{app:sharp-bounds-ex-x2} violates assumption (A.1) when $x_0 > 0 $ and $T=1/x_0$, yet the $V$ in~\cref{e:V-example-blowup-solutions} implies sharp bounds on $\maxphiT$.

It is an open challenge to weaken the assumptions of \cref{th:strong-duality}. {With infinite time horizons, for instance, auxiliary functions give sharp bounds in some examples but not others. Sharp bounds for an infinite time horizon are illustrated in \cref{app:sharp-bounds-ex-x2}. In the next example, on the other hand, there exists a set $X$ such that infinite-time analogues of assumptions (A.1) and (A.2) hold, yet differentiable local auxiliary functions cannot give sharp bounds on~$\maxphiinf$.}

\begin{example}
	\label{ex:strong-duality-failure}
	\belowpdfbookmark{Example~\ref{ex:strong-duality-failure}}{bookmark:strong-duality-failure}
	
	Consider the one-dimensional ODE
	\begin{equation}
		\label{e:xdot=x2-x3}
		\dot{x}=x^2-x^3,
	\end{equation}
	which has two equilibria: the semistable point $x_s = 0$ and the attractor $x_a = 1$. Although no explicit analytical solution is available, trajectories exist for all times. As $t\to\infty$, they approach $x_s$ if $x_0\le0$ and approach $x_a$ if $x_0>0$. We let
	\begin{equation}
		\Phi(x)=4x(1-x)
	\end{equation}
	and seek upper bounds on $\maxphiinf$ for initial conditions in the set $X_0=[-1,0]$. All trajectories starting in $X_0$ approach $x_s$ from below, so
	\begin{equation}
		\maxphiinf = \sup_{\substack{x_0 \in X_0\\[2pt]t\in[t_0,\infty)}}\Phi[x(t;x_0)] = 0.
	\end{equation}
	Trajectories with initial conditions in $X_0=[-1,0]$ remain there, so the smallest $X$ we could choose is $X=X_0$. With this choice, $V\equiv 0$ gives a sharp upper bound. However, suppose we choose  $X = [-1,1]$, which is the smallest connected set that is globally attracting and contains $X_0$. For this $X$, assumptions analogous to (A.1) and (A.2) in \cref{th:strong-duality} hold on the infinite time interval $[0,\infty)$, yet any upper bound on $\maxphiinf=0$ provable with differentiable local $V$ cannot be smaller than 1. Indeed, any such $V$ must be continuous at $(t,x)=(0,0)$ and arguing as in \cref{ex:ex-1d-unbounded-trajectories} shows that $V(0,0) \geq 1$, so any $V$ subject to~(\ref{e:V-conditions}a,b) satisfies
	\begin{equation}
		\max_{x \in [-1,0]} V(0,x) \geq 1.
	\end{equation}
	Thus, with $X=[-1,1]$, any bound implied by~\cref{e:weak-duality} is no smaller than 1 as claimed above.
	\markendexample\end{example}

The inability of differentiable auxiliary functions to produce sharp bounds in \cref{ex:ex-1d-unbounded-trajectories,ex:strong-duality-failure} is due to the map $x_0 \mapsto x(t \given 0,x_0)$ from initial conditions to trajectories not being locally Lipschitz near the saddle point $x_s=0$. Because the time horizon is infinite, a fixed distance from $x_s$ is eventually reached by trajectories starting arbitrarily close to $x_s$. This does not happen when the time horizon is finite. We cannot say whether the strong duality result of \cref{th:strong-duality} applies with an infinite time horizon when the map $x_0 \mapsto x(t \given 0,x_0)$ is Lipschitz; {both the original proof in~\cite{Lewis1980} and our alternative in \cref{s:direct-proof-strong-duality} rely on the time interval $\T$ being compact.}

\subsubsection{Nondifferentiable auxiliary functions}
\label{ss:discontinuous-afs}

One way to guarantee that optimization over $V$ gives sharp bounds on $\maxphi$, regardless of whether the time horizon is finite or infinite, is to weaken the local sufficient condition~(\ref{e:V-conditions}a,b) by removing the requirement that $V$ is differentiable. Since the Lie derivative $\mathcal L V$ may not be defined in this case, condition~\cref{e:cond1} must be replaced with the direct constraint that $V$ does not increase along trajectories,
\begin{equation}
	\label{e:cond1-discontinuous}
	V[s+\tau,x(s+\tau \given s, y)] \leq V(s,y)  \quad \forall \tau \geq 0 \text{ and } (s,y) \in \Omega.
\end{equation}
Slight modification of the argument leading to~\cref{e:weak-duality-incomplete} then proves
\begin{equation}
	\label{e:weak-duality-discontinuous}
	\maxphiinf \leq \min_{\subalign{V:\,&\cref{e:cond2},\\&\cref{e:cond1-discontinuous}}} \, \sup_{x_0 \in X_0} V(t_0,x_0).
\end{equation}
Condition~\cref{e:cond1-discontinuous} cannot be checked when trajectories are not known exactly.\footnote{For systems with discrete-time dynamics, on the other hand, discontinuous $V$ may be practically useful. This work focuses on continuous-time dynamics, but {the} convex bounding framework {of \cref{ss:framework}} readily extends to maps $x_{n+1} = F(n,x_{n})$ when the continuous-time decay condition~\cref{e:cond1} is replaced by the discrete version of~\cref{e:cond1-discontinuous}, namely that $V[n+1,F(n,x_{n})] \leq V(n,x_n)$ for all $n \in \mathbb{N}$ and $x_n \in \X$. This can be checked directly without knowing trajectories. In addition, the computational methods described in \cref{s:sos-optimization} can be applied with minor modifications to finite-dimensional polynomial maps.} Differentiability of $V$ therefore is crucial to find explicit bounds for particular systems because the Lie derivative $\mathcal L V$ gives a way to check that $V$ is nonincreasing without knowing trajectories.

For theoretical purposes, on the other hand, nondifferentiable $V$ are useful because
\begin{equation}
	\label{e:value-function}
	V^*(s,y) := \sup_{t \geq s} \Phi[t, x(t \given s, y)]
\end{equation}
is optimal and attains equality in~\cref{e:weak-duality-discontinuous}, meaning
\begin{equation}
	\label{e:strong-duality-discontinuous}
	\maxphiinf = \min_{\subalign{V:\,&\cref{e:cond2},\\&\cref{e:cond1-discontinuous}}} \, \sup_{x_0 \in X_0} V(t_0,x_0) = \sup_{x_0\in X_0}V^*(t_0,x_0).
\end{equation}
This $V^*$ is discontinuous in general because of the maximization over time. It follows directly from the definition of $\maxphiinf$ that $V^*$ satisfies~\cref{e:cond2} globally and gives a sharp bound when substituted into~\cref{e:strong-duality-discontinuous}. To see that~\cref{e:cond1-discontinuous} holds, observe that the trajectory starting from $y$ at time $s$ is the same as that starting from $x(s+\tau\given s,y)$ at time $s+\tau$. Then, since $\tau \geq 0$,
\begin{align}
	V^*[s+\tau,x(s+\tau \given s, y)]
	&= \sup_{t  \geq s+\tau} \Phi\{t, x[t \given s+\tau, x(s+\tau \given s, y) ]\} \\
	&= \sup_{t  \geq s+\tau} \Phi[t, x(t \given s, y)]  \notag \\
	&\leq \sup_{t  \geq s} \Phi[t, x(t \given s, y)] \notag \\
	&= V^*(s,y). \notag
\end{align}
\Cref{ex:dicontinuous-af} below gives $V^*$ in a case where trajectories are known.


\begin{example}
	\label{ex:dicontinuous-af}
	\belowpdfbookmark{Example~\ref{ex:dicontinuous-af}}{bookmark:dicontinuous-af}
	
	Recall \cref{ex:ex-1d-unbounded-trajectories}, which shows that differentiable global auxiliary functions cannot give sharp bounds for the ODE~\cref{e:xdot=x2} with $\Phi$ as in~\cref{e:phi-ode-blowup} and the single initial condition $X_0=\{0\}$. For the auxiliary function
	\begin{equation}
		V(t,x) = \begin{cases}
			0, & \phantom{0< }\,\,x \leq 0,\\
			1, & 0 < x \leq\tfrac12,\\
			\displaystyle\frac{4x}{1+4 x^2}, &\phantom{0<}\,\,x>\tfrac12,
		\end{cases}
	\end{equation}
	which is discontinuous at $x=0$, explicit ODE solutions confirm that $V$ satisfies the nonincreasing condition~\cref{e:cond1-discontinuous}. This $V$ implies sharp bounds on $\maxphiinf$ for all sets $X_0$ of initial conditions, and in fact it is exactly the optimal $V^*$ defined by~\cref{e:value-function}.	
	\markendexample\end{example}


When trajectories are not known explicitly, the $V^*$ defined by~\cref{e:value-function} cannot be used to find explicit bounds, but it can still be useful. For instance, in \cref{s:direct-proof-strong-duality} we prove \cref{th:strong-duality} by showing that $V^*$ can be approximated with differentiable $V$. Moreover, $V^*$ has arisen in various contexts. One field in which $V^*$ arises is optimal control theory. Using ideas from dynamic programming for optimal stopping problems (see, e.g., section III.4.2 in~\cite{Bardi1997}) one can show that if $V^*$ is bounded and uniformly continuous on $\Omega$, then it is exactly the so-called value function for problem~\cref{e:maxphi} and is the unique viscosity solution to its corresponding Hamilton--Jacobi--Bellman complementarity system. This system consists of the auxiliary function constraints~{(\ref{e:V-conditions}a,b)} and the condition
\begin{equation}
	\mathcal{L}V(t,x)[\Phi(t,x) - V(t,x)] = 0   \quad\forall (t,x) \in \Omega.
	\label{e:hjb-complementarity}
\end{equation}
The auxiliary function framework studied in this work therefore can be seen as a relaxation of the Hamilton--Jacobi--Bellman system that results from dropping~\cref{e:hjb-complementarity}. A second connection between $V^*$ and existing literature occurs in the particular case of linear dynamics on a Hilbert space, as explained in the following example.

\begin{example}
	Let $X$ be a Hilbert space with inner product $\langle\cdot,\cdot \rangle$. Consider the autonomous \emph{linear} dynamical system $\dot{x} = A x$ with initial condition $x(0)=x_0$, where $A$ is a closed and densely defined linear operator, not necessarily bounded, that generates a strongly continuous semigroup $\{S_t\}_{t \geq 0}$. Trajectories satisfy $x(t) = S_t\,x_0$, so $S_t$ is the flow map. Suppose $S_t$ is compact for each $t>0$. In various linear systems of this type, one is interested in the maximum possible amplification of the norm $\|x\| = \sqrt{\langle x,x\rangle}$, which in {the} present framework means that $\Phi(x)=\|x\|$ with the initial set $X_0=\{x_0\in X:\,\|x_0\|=1\}$. In fluid mechanics, for instance, such problems have been studied to understand linear mechanisms by which perturbations are amplified (see, e.g.,~\cite{Trefethen1993}). With the above choices,~\cref{e:value-function} and~\cref{e:strong-duality-discontinuous} reduce to the well-known result
	\begin{equation}
		\Phi^*_\infty
		= \adjustlimits \sup_{\|x_0\|=1} \sup_{t\geq 0} \, \Phi(S_t\,x_0)
		= \adjustlimits \sup_{t\geq 0} \sup_{\|x_0\|=1}\, \sqrt{\langle S_t\,x_0, S_t\,x_0 \rangle}
		= \sup_{t \geq 0} \,  \sigma_{\rm max}(S_t),
	\end{equation}
	where $\sigma_{\rm max}(S_t)$ denotes the maximum singular value of $S_t$. We stress, however, that {the} general bounding framework {of \cref{ss:framework}} does not require an explicit flow map and applies also to nonlinear systems.
	\markendexample\end{example}

\section{Optimal trajectories}
\label{s:optimal-trajectories}

So far we have presented a framework for bounding the magnitudes of extreme events without finding the extremal trajectories themselves. The latter is much harder in general, partly due to the non-convexity of searching over initial conditions. However, auxiliary functions producing bounds on $\maxphi$ do give some information about optimal trajectories. Specifically, sublevel sets of any auxiliary function define regions of state space in which optimal and near-optimal trajectories must spend a certain fraction of time prior to the extreme event. A similar connection has been found between trajectories that maximize infinite-time averages and auxiliary functions that give bounds on these averages~\cite{Tobasco2018,Korda2018a}.
The following discussion applies to both global and local auxiliary functions with either finite or infinite time horizons. The simpler case of exactly optimal auxiliary functions is addressed in \cref{s:optimal-V}, followed by the general case in \cref{s:suboptimal-V}.

\subsection{Optimal auxiliary functions}
\label{s:optimal-V}

Suppose for now that the optimal bound~\cref{e:weak-duality-incomplete} is sharp and is attained by some $V^*$, in which case
\begin{equation}
	\label{e:optimal-af-definition}
	\sup_{x_0 \in X_0} V^*(t_0,x_0) = 
	\maxphi.
\end{equation}
Let $x_0^*\in X_0$ be an initial condition leading to an optimal trajectory, which attains the maximum value $\maxphi$ at some time $t^*$. To determine the value of $V^*$ on an optimal trajectory, note that the same reasoning leading to~\cref{e:weak-duality-incomplete} yields
\begin{align}
	\label{e:Vopt-inequalities}
	\maxphi
	&= \Phi[t,x(t^*;x_0^*)]  \\
	&\leq V^*(t_0,x_0^*) + \int_{t_0}^{t^*} \mathcal{L}V^*[\xi, x(\xi \given t_0,x_0^*)] \dxi \notag \\
	&\leq \sup_{x_0 \in X_0} V^*(t_0,x_0) + \int_{t_0}^{t^*} \mathcal{L}V^*[\xi, x(\xi \given t_0,x_0^*)] \dxi \notag \\
	&= \maxphi+ \int_{t_0}^{t^*} \mathcal{L}V^*[\xi, x(\xi \given t_0,x_0^*)] \dxi \notag \\
	&\leq 
	\maxphi \notag
\end{align}
The above inequalities must be equalities and $\mathcal{L}V^* \leq 0$, so $\mathcal{L}V^*\equiv0$ and $V^*\equiv\maxphi$ along an optimal trajectory up to time $t^*$. These constant values of $\mathcal{L}V^*$ and $V^*$ can be used to define sets in which optimal trajectories must lie:
\begin{align}
	\label{e:R0}
	\mathcal{R}_0 &:= \left\{(t,x)\in\Omega :\, \mathcal{L}V^*(t,x) = 0 \right\}, \\
	\label{e:S0}
	\mathcal{S}_0 &:= \left\{(t,x)\in\Omega:\, V^*(t,x) = \sup_{x_0 \in X_0} V^*(t_0,x_0) \right\},
\end{align}
where we have used \cref{e:optimal-af-definition} in defining $\mathcal{S}_0$. The intersection $\mathcal{S}_0 \cap \mathcal{R}_0$ contains the graph of each optimal trajectory until the last time that trajectory attains the maximum value $\maxphi$. In general, $\mathcal{S}_0 \cap \mathcal{R}_0$ may also contain points not on any optimal trajectory.

\subsection{General auxiliary functions}
\label{s:suboptimal-V}

Consider an auxiliary function $V$ and an initial condition $x_0$ that are a near-optimal pair, meaning that an upper bound on $\maxphi$ implied by $V$ and a lower bound implied by the trajectory starting from $x_0$ differ by no more than~$\delta$. That is, calling the upper bound $\lambda$,
\begin{equation}
	\label{e:delta-suboptimal-V}
	\lambda-\delta\le\sup_{t\in\mathcal{T}}\Phi[t,x(t;t_0,x_0)] \le \maxphi \le \sup_{x_0 \in X_0} V(t_0,x_0) \leq \lambda.
\end{equation}
The upper bound $\lambda$ might be larger than $\sup_{x \in X_0} V(t_0,x)$ if the latter cannot be computed exactly, and the lower bound $\lambda-\delta$ might be smaller than $\sup_{t\in\mathcal{T}}\Phi[t,x(t;t_0,x_0)]$ if the trajectory starting from $x_0$ is only partly known.

Let $t^*$ denote the latest time during the interval $\mathcal{T}$ when the trajectory starting at $x_0$ attains or exceeds the value $\lambda-\delta$. The constraints~(\ref{e:V-conditions}a,b) require $V$ to decay along trajectories and bound $\Phi$ pointwise, so
\begin{equation}
	\lambda-\delta
	\leq V[t^*,x(t^* \given t_0,x_0)]
	\leq V[t,x(t\given t_0,x_0)]
	\leq V(t_0,x_0)
	\leq \sup_{x \in X_0} V(t_0,x)
	\leq \lambda
\end{equation}
for all $t\in[t_0,t^*]$. The above inequalities imply that the trajectory starting at $x_0$ satisfies
\begin{equation}
	0 \leq \lambda - V[t, x(t\given t_0,x_0)] \leq \delta
\end{equation}
up to time $t^*$, so its graph must be contained in the set
\begin{equation}
	\label{e:Sdelta}
	\mathcal{S}_{\delta} := \left\{ (t,x) \in\Omega :\, 0 \leq \lambda - V(t, x) \leq \delta \right\},
\end{equation}
which extends to suboptimal $V$ the definition \cref{e:S0} of $\mathcal{S}_0$ for optimal $V^*$.

The definition \cref{e:R0} of $\mathcal{R}_0$ also can be extended to suboptimal $V$, but the resulting sets are guaranteed to contain optimal and near-optimal trajectories only for a certain amount of time. When $V$ satisfies~\cref{e:delta-suboptimal-V}, an argument similar to~\cref{e:Vopt-inequalities} shows that
\begin{equation}
	\maxphi \leq \maxphi + \delta + \int_0^{t^*}  \mathcal{L}V[\xi, x(\xi \given t_0,x_0)] \dxi,
\end{equation}
and therefore
\begin{equation}
	-\int_{t_0}^{t^*}  \mathcal{L}V[\xi, x(\xi \given t_0,x_0)] \dxi \leq \delta.
\end{equation}
Since $\mathcal{L}V\le0$, the above condition can be combined with Chebyshev's inequality (cf.\ \S VI.10 in~\cite{Knapp2005basic}) to estimate, for any $\varepsilon>0$, the total time during $[t_0,t^*]$ when $\mathcal{L}V\le-\varepsilon$. Letting $\Theta_\varepsilon$ denote this total time and letting $\mathbbm{1}_A$ denote the indicator function of a set $A$, we find
\begin{equation}
	\Theta_{\varepsilon}
	:=\int_{t_0}^{t^*} \mathbbm{1}_{ \{\xi:\,\mathcal{L}V[\xi, x(\xi \given t_0,x_0)] < -\varepsilon \} } \dxi
	\leq -\frac{1}{\varepsilon} \int_{t_0}^{t^*} \mathcal{L}V[\xi, x(\xi \given t_0,x_0)] \dxi
	\leq \frac{\delta}{\varepsilon}.
\end{equation}
In other words, a trajectory on which $\Phi\ge \lambda-\delta$ at some time $t^*$ cannot leave the set
\begin{equation}
	\label{e:Repsilon}
	\mathcal{R}_\varepsilon := \left\{ (t,x)\in\Omega :\,  -\varepsilon \leq \mathcal{L}V(t,x) \leq 0  \right\}
\end{equation}
for longer than $\delta/\varepsilon$ time units during the interval $[t_0,t^*]$. This statement is most useful when the upper bound $\maxphi\le\lambda$ implied by $V$ is close to sharp, so there exist trajectories where $\Phi$ attains values $\lambda-\delta$ with small $\delta$. Then one may take $\varepsilon$ small enough for $\mathcal{R}_\varepsilon$ to exclude much of state space, while also having it be meaningful that near-optimal trajectories cannot leave $\mathcal{R}_\varepsilon$ for longer than $\delta/\varepsilon$. The computational construction of $\mathcal S_{\delta}$ and $\mathcal{R}_\varepsilon$ for a polynomial ODE is illustrated by \cref{ex:sos-2d-example} in the next section.

\section{Computing bounds for ODEs using SOS optimization}
\label{s:sos-optimization}

The optimization of auxiliary functions and their corresponding bounds is prohibitively difficult in many cases, even by numerical methods. However, computations often are tractable when the system~\cref{e:system} is an ODE with polynomial righthand side $F:\R \times \R^n \to \R^n$, the observable $\Phi$ is polynomial, and the set of initial conditions $X_0$ is a basic semialgebraic set:
\begin{equation}
	\label{e:X0-semialg}
	X_0 := \{ x \in \R^n :\, f_1(x)\geq 0,\,\ldots, f_p(x) \geq 0, \,g_1(x)= 0,\,\ldots, g_q(x)=0 \}
\end{equation}
for given polynomials $f_1,\,\ldots,\,f_p$ and $g_1,\ldots,\,g_q$. The set $\Omega \subset \R \times \R^n$ in which the graphs of trajectories remain over the time interval $\mathcal{T}$ is assumed to be basic semialgebraic as well:
\begin{equation}
	\label{e:omega-semialg}
	\Omega := \{ (t,x) \in \R \times \R^n :\,h_1(t,x)\geq 0,\,\ldots, h_r(t,x) \geq 0, \,\ell_1(t,x)= 0,\,\ldots, \ell_s(t,x)=0 \}
\end{equation}
for given polynomials $h_1,\,\ldots,\,h_r$ and $\ell_1,\ldots,\,\ell_s$. To construct global auxiliary functions with state space $\R^n$, the set $\Omega$ can be specified by a single inequality: $h_1(t,x):=t-t_0\ge0$ or $h_1(t,x):=(t-t_0)(T-t)\ge0$ for infinite or finite time horizons, respectively. To construct local auxiliary functions, more inequalities or equalities must be added to define a smaller $\Omega$.

For any integer $d$, let $\R_{d}[t,x]$ and $\R_{d}[x]$ denote the vector spaces of real polynomials of degree $d$ or smaller in the variables $(t,x)$ and $x$, respectively. Restricting the optimization over differentiable auxiliary functions in~\cref{e:weak-duality} to polynomials in $\R_{d}[t,x]$ gives
\begin{equation}
	\label{e:weak-duality-polynomial}
	\maxphi
	\leq \inf_{\substack{V \in \R_{d}[t,x]\\\text{s.t. (\ref{e:V-conditions}a,b)}} }  \sup_{x_0 \in X_0} V(t_0, x_0).
\end{equation}
Recalling that the supremum over $X_0$ is the smallest upper bound $\lambda$ on that set, and substituting expression~\cref{e:LV-odes} for $\mathcal{L}V$ in the ODE case into~\cref{e:cond1}, we can express the righthand side of~\cref{e:weak-duality-polynomial} as a constrained minimization over $V$ and $\lambda$:
\begin{align}
	\label{e:sos-opt-partial}
	\maxphi
	\leq \inf_{\substack{V \in \R_{d}[t,x]\\\lambda \in \R}} \, \{ \lambda :\; 
	-\partial_t V(t,x) - F(t,x) \cdot \nabla_x V(t,x) &\geq 0  \text{ on } \Omega, \\[-1.35\fsize]
	V(t,x) - \Phi(t,x) &\geq 0  \text{ on } \Omega, \notag \\
	\lambda-V(t_0,x) &\geq 0 \text{ on } X_0 \}.\notag
\end{align}
Under the assumptions outlined above, the three constraints on $V$ and $\lambda$ are polynomial inequalities on basic semialgebraic sets. Checking such constraints is NP-hard in general~\cite{Murty1987}, so a common strategy is to replace them with stronger but more tractable constraints. Here we require that the polynomials in~\cref{e:sos-opt-partial} admit weighted sum-of-squares (WSOS) decompositions, which can be searched for computationally by solving~SDPs. These WSOS constraints imply that the inequalities in~\cref{e:sos-opt-partial} hold on $\Omega$ or $X_0$ but not necessarily outside these sets.

To define the relevant WSOS decompositions, let $\Sigma_{\mu}[t,x]$ and $\Sigma_{\mu}[x]$ be the cones of SOS polynomials of degrees up to $\mu$ in the variables $(t,x)$ and $x$, respectively. That is, a polynomial $\sigma\in\R_\mu[x]$ belongs to $\Sigma_\mu[x]$ if and only if there exist a finite family of polynomials $q_1,\,\ldots,\,q_k \in \R_{\lfloor \mu/2\rfloor}[x]$ such that $\sigma = \sum_{i=1}^k q_i^2$. For each integer $\mu$ that is no smaller than the highest polynomial degree appearing in the definition \cref{e:X0-semialg} of $X_0$, the set of degree-$\mu$ WSOS polynomials associated with $X_0$ is
\begin{align}
	\Lambda_\mu := \Big\{ 
	\sigma_0 + \sum_{i=1}^p f_i \sigma_i + \sum_{i=1}^q g_i \rho_i :\; \sigma_0 &\in \Sigma_\mu[x],  \\[-1.5ex]
	\sigma_i &\in \Sigma_{\mu-\deg(f_i)}[x], \;i=1,\,\ldots,\,p\notag \\
	\rho_i &\in \R_{\mu-\deg(g_i)}[x], \;i=1,\,\ldots,\,q
	\,\Big\}. \notag
\end{align}
In words, WSOS polynomials associated with $X_0$ can be written as a weighted sum of polynomials, where the weights are $\{1,f_1,\ldots,f_p,g_1,\ldots,g_q\}$ and the polynomials weighted by $\{1,f_1,\ldots,f_p\}$ are SOS. Every SOS polynomial is globally nonnegative, and it is WSOS with respect to any $X_0$ since all terms in the WSOS decomposition aside from $\sigma_0$ can be zero. On the other hand, WSOS polynomials need not be SOS.

Analogously to $\Lambda_\mu$, the set of degree-$\mu$ WSOS polynomials associated with $\Omega$ is
\begin{align}
	\Gamma_\mu := \Big\{ 
	\sigma_0 + \sum_{i=1}^r h_i \sigma_i + \sum_{i=1}^s \ell_i \rho_i :\; \sigma_0 &\in \Sigma_\mu[t,x],  \\[-1.5ex]
	\sigma_i &\in \Sigma_{\mu-\deg(h_i)}[t,x], \;i=1,\,\ldots,\,r\notag \\
	\rho_i &\in \R_{\mu-\deg(\ell_i)}[t,x], \;i=1,\,\ldots,\,s
	\,\Big\}.\notag
\end{align}
If a polynomial belongs to $\Gamma_\mu$ or $\Lambda_\mu$, then it is nonnegative on $\Omega$ or $X_0$, respectively. (The converse is false beyond a few special cases~\cite{Hilbert1888}.) We can strengthen the inequality constraints on $V$ in~\cref{e:sos-opt-partial} by requiring WSOS representations instead of nonnegativity. This gives
\begin{align}
	\label{e:sos-opt}
	\maxphi
	\leq \lambda^*_d 
	&:= \inf_{\substack{V \in \R_{d}[t,x]\\\lambda \in \R}} \, \{ \lambda :\; 
	-\partial_t V - F \cdot \nabla_{x}V \in \Gamma_{d-1+\deg(F)}, \\[-1.35\fsize]
	&\hspace{131pt}V - \Phi \in \Gamma_{d}, \notag \\
	&\hspace{108pt} \lambda-V(t_0,\cdot) \in \Lambda_d \}. \notag
\end{align}
For each integer $d$, the righthand side is a finite-dimensional optimization problem with WSOS constraints that are linear in the decision variables---the scalar $\lambda$ and the coefficients of the polynomial $V$. It is well known that such problems can be reformulated as SDPs (e.g., Section 2.4 in~\cite{Lasserre2015}). Such SDPs can be solved numerically in polynomial time, barring problems with numerical conditioning. Open-source software is available to assist both with the reformulation of WSOS optimizations as SDPs and with the solution of the latter.\footnote{Most modeling toolboxes for polynomial optimization, including the ones used in this work, do not natively support WSOS constraints. However, these can be implemented using standard SOS constraints. For instance, the WSOS constraint $P \in \Gamma_\mu$ can be implemented as the SOS constraint $P - \sum_{i=1}^p h_i \sigma_i - \sum_{i=1}^q \ell_i \rho_i \in \Sigma_\mu[t,x]$, along with the SOS constraints $\sigma_i \in \Sigma_{\mu - \deg(h_i)}[t,x]$ for $i=1,\ldots,p$. This formulation, known as the generalized S-procedure~\cite{Tan2006,Fantuzzi2016siads}, introduces more decision variables than the direct WSOS approach of~\cite[Section 2.4]{Lasserre2015}. The additional variables may lead to larger computations, but they can improve numerical conditioning by giving more freedom for the rescaling that is done within SDP solvers.} The SOS computations in \cref{ex:nonautonomous-example-sos,ex:sos-2d-example,ss:vdp}, and in \cref{app:iterative-procedure}, were set up in MATLAB using \yalmip~\cite{Lofberg2004,Lofberg2009} or a customized version of \spotless.\footnote{\href{https://github.com/aeroimperial-optimization/aeroimperial-spotless}{https://github.com/aeroimperial-optimization/aeroimperial-spotless}} The resulting SDPs were solved with the interior-point solver \mosek\ v.8~\cite{mosek} except in \cref{ss:vdp}, where the SDP was solved in multiple precision arithmetic with \sdpagmp\ v.7.1.3~\cite{sdpagmp}.

The bounds $\lambda^*_d$ found by solving~\cref{e:sos-opt} numerically form a nonincreasing sequence as the degree $d$ of $V$ is raised. These bounds appear to become sharp in various cases, including \cref{ex:nonautonomous-example-sos} above and \cref{ex:sos-2d-example} below. We cannot say whether such convergence occurs in all cases, even when auxiliary functions arbitrarily close to optimality are known to exist. This is due to our restriction to polynomial $V$ and use of WSOS constraints, which are sufficient but not necessary for nonnegativity. However, if the sets $X_0$ and $\Omega$ are both compact and there exists a differentiable $V$ attaining equality in~\cref{e:weak-duality}, then the following theorem guarantees that bounds from SOS computations become sharp as the polynomial degree is raised. The proof is a standard argument in SOS optimization and relies on a result known as Putinar's Positivstellensatz~\cite[Lemma 4.1]{Putinar1993}, which guarantees the existence of WSOS representations for strictly positive polynomials; details can be found in Section 2.4 of~\cite{Lasserre2015}.
\begin{theorem}
	\label{th:sos-convergence}
	Let $\Omega$ and $X_0$ be compact semialgebraic sets. Assume the definitions of $\Omega$ and $X_0$ include inequalities $C_1-t^2 - \|x\|_2^2\ge0$ and $C_2-\|x\|_2^2\ge0$ for some $C_1$ and $C_2$, respectively, which can always be made true by adding inequalities that do not change the specified sets. Let $\lambda_d^*$ be the bound from the optimization~\cref{e:sos-opt}. If differentiable auxiliary functions give arbitrarily sharp bounds~\cref{e:strong-duality} on $\maxphiT$, then $\lambda_d^* \to \maxphiT$ as $d \to \infty$.
\end{theorem}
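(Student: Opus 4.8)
The plan is to prove the two bounds $\maxphiT\le\liminf_d\lambda_d^*$ and $\limsup_d\lambda_d^*\le\maxphiT$ separately. The first is immediate weak duality: for any degree $d$ and any pair $(V,\lambda)$ feasible for~\cref{e:sos-opt}, membership in the WSOS cones forces $-\mathcal LV\ge0$ and $V-\Phi\ge0$ pointwise on $\Omega$, so $V\in\V(\Omega)$, while $\lambda-V(t_0,\cdot)\in\Lambda_d$ gives $\sup_{x_0\in X_0}V(t_0,x_0)\le\lambda$. Hence~\cref{e:weak-duality-incomplete} yields $\maxphiT\le\lambda$, and taking the infimum over feasible pairs gives $\maxphiT\le\lambda_d^*$ for every $d$ (trivially so when~\cref{e:sos-opt} is infeasible and $\lambda_d^*=+\infty$).

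The real work is the reverse inequality, which I would obtain by exhibiting, for each $\epsilon>0$, a polynomial pair feasible for~\cref{e:sos-opt} with $\lambda\le\maxphiT+\epsilon$ once $d$ is large enough. By the strong-duality hypothesis~\cref{e:strong-duality}, fix a differentiable $\tilde V\in\V(\Omega)$ with $\sup_{x_0\in X_0}\tilde V(t_0,x_0)\le\maxphiT+\tfrac\epsilon2$. The plan is then to (i) replace $\tilde V$ by a nearby polynomial, (ii) perturb that polynomial so that the three constraint inequalities hold \emph{strictly} on $\Omega$ and $X_0$, and (iii) invoke Putinar's Positivstellensatz to certify the strict inequalities by WSOS representations of finite degree. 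For step (i) I would approximate $\tilde V$ in the $C^1$ norm on $\Omega$ by a polynomial $p$ with $\|\tilde V-p\|_{C^1(\Omega)}<\delta$, where $\delta$ is to be chosen small. I expect this $C^1$ (rather than merely uniform) approximation to be the main technical obstacle: because the Lie-derivative constraint involves $\nabla V$, one needs simultaneous approximation of $\tilde V$ and its first derivatives, which I would justify by extending $\tilde V$ to a $C^1$ function on a box containing $\Omega$ (a Whitney/Tietze extension) and approximating the extension in $C^1$ by polynomials, e.g.\ via mollification. Since $\Omega$ is compact and $F$ continuous, $M:=\sup_\Omega\|F\|_2<\infty$, whence $|\mathcal L(p-\tilde V)|\le(1+M)\delta$ on $\Omega$; consequently $-\mathcal Lp\ge-(1+M)\delta$ and $p-\Phi\ge-\delta$ on $\Omega$, while $p(t_0,\cdot)\le\maxphiT+\tfrac\epsilon2+\delta$ on $X_0$.

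For step (ii) I would set $V:=p-\mu t+c$ with small constants $\mu,c>0$. Since $\mathcal L(t)=1$ and $\mathcal L$ annihilates constants, $-\mathcal LV\ge\mu-(1+M)\delta$, which is strictly positive once $\mu>(1+M)\delta$; choosing in addition $c$ larger than $\delta+\mu\sup_\Omega t$ makes $V-\Phi\ge c-\delta-\mu\sup_\Omega t>0$ on $\Omega$; and taking $\lambda:=\maxphiT+\tfrac\epsilon2+\delta+c-\mu t_0+\nu$ with a small $\nu>0$ makes $\lambda-V(t_0,\cdot)\ge\nu>0$ on $X_0$. All the added quantities $\mu,c,\nu$ can be taken of order $\delta$, so for $\delta$ small enough $\lambda\le\maxphiT+\epsilon$ while every constraint polynomial is strictly positive on the relevant set.

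Finally, in step (iii) the assumed ball inequalities $C_1-t^2-\|x\|_2^2\ge0$ and $C_2-\|x\|_2^2\ge0$ render the quadratic modules describing $\Omega$ and $X_0$ Archimedean, so Putinar's Positivstellensatz represents $-\mathcal LV$, $V-\Phi$, and $\lambda-V(t_0,\cdot)$ as elements of $\Gamma_{\mu_1}$, $\Gamma_{\mu_2}$, and $\Lambda_{\mu_3}$ for some finite $\mu_1,\mu_2,\mu_3$. For every $d$ with $d\ge\deg V$, $d-1+\deg(F)\ge\mu_1$, $d\ge\mu_2$, and $d\ge\mu_3$, the pair $(V,\lambda)$ is feasible for~\cref{e:sos-opt} (the cones being nested in their degree parameter), so $\lambda_d^*\le\lambda\le\maxphiT+\epsilon$. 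Hence $\limsup_d\lambda_d^*\le\maxphiT+\epsilon$ for every $\epsilon>0$, which together with the first inequality gives $\lambda_d^*\to\maxphiT$.
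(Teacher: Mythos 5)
Your proposal is correct and follows essentially the same route as the paper's proof: approximate a near-optimal differentiable auxiliary function by a polynomial in the $C^1(\Omega)$ norm (the paper cites density of polynomials in $C^1$ of a compact set), add a small affine-in-$t$ perturbation to make the three constraints strictly positive, and invoke Putinar's Positivstellensatz via the Archimedean ball inequalities to get finite-degree WSOS certificates. Your shift $-\mu t+c$ is the paper's $\gamma\left(1-\tfrac{t}{2T}\right)$ in different clothing, and the only additions are your explicit weak-duality step and the more detailed justification of $C^1$ polynomial approximation.
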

\begin{proof}
	Assume that the semialgebraic definitions of $\Omega$ and $X_0$ include inequalities of the form $C_1-t^2 - \|x\|_2^2\ge0$ and $C_2- \|x\|_2^2\ge0$, respectively. If not, these inequalities can be added with $C_1$ and $C_2$ large enough to not change which points lie in $\Omega$ and $X_0$ since both sets are compact. Then, $C_1-t^2 - \|x\|_2^2 \in \Gamma_\mu$ and $C_2-\|x\|_2^2 \in \Lambda_\mu$ for all integers $\mu$.\footnote{\Cref{th:sos-convergence} holds also when the semialgebraic definitions of $\Omega$ and $X_0$ satisfy Assumption 2.14 in~\cite[Section 2.4]{Lasserre2015}, which is a slightly weaker but more technical condition implying the inclusions $C_1-t^2 - \|x\|_2^2 \in \Gamma_\mu$ and $C_2-\|x\|_2^2 \in \Lambda_\mu$ for all sufficiently large integers $\mu$.}
	
	To prove that $\lambda_d^* \to \maxphiT$ as $d \to \infty$, we establish the equivalent claim that, for each $\varepsilon>0$, there exists an integer $d$ such that $\lambda_d^* \leq \maxphiT + \varepsilon$. Choose $\gamma>0$ such that
	\begin{equation}
		\label{e:sos-duality-gamma}
		\gamma < \frac{2T \varepsilon}{5T-t_0}.
	\end{equation}
	By assumption there exists an auxiliary function $W\in C^1(\Omega)$, not generally a polynomial, such that
	\begin{equation}
		\label{e:suboptimal-condition-sf-th2}
		W(t_0,x_0) \leq \maxphiT + \gamma \quad\text{on } X_0.
	\end{equation}
	Since $\Omega$ is compact, polynomials are dense in $C^1(\Omega)$ (cf.\ Theorem 1.1.2 in~\cite{Llavona1986}). That is, for each $\delta>0$ there exists a polynomial $P$ such that $\|W-P\|_{C^1(\Omega)} \leq \delta$, where $\|\cdot\|_{C^k(\Omega)}$ denotes the usual norm on $C^k(\Omega)$---the sum of the $L^\infty$ norms of all derivatives up to order $k$. Fix such a $P$ with
	\begin{equation}
		\label{e:sos-duality-delta}
		\delta < \frac{\gamma}{\max\left\{ 2,2T,2T\Vert F_1\Vert_{C^0(\Omega)},\ldots,2T\Vert F_n\Vert_{C^0(\Omega)} \right\}}.
	\end{equation}
	By definition $\Omega$ contains the initial set $\{t_0\}\times X_0$, so $\abs{W(t_0,\cdot)-P(t_0,\cdot)} < \delta$ uniformly on $X_0$. We define the polynomial auxiliary function
	\begin{equation}
		V(t,x) = P(t,x) + \gamma\left( 1- \frac{t}{2T}\right).
	\end{equation}
	With $\delta$ as in~\cref{e:sos-duality-delta}, $\gamma$ as in~\cref{e:sos-duality-gamma}, and $W$ satisfying~\cref{e:suboptimal-condition-sf-th2}, elementary estimates show that
	\begin{subequations}
		\label{e:strict-inequalities-V}
		\begin{align}
			-\partial_t V - F \cdot \nabla_{x}V &> 0 \quad\text{on } \Omega,\\
			V - \Phi &> 0 \quad\text{on } \Omega,\\
			\maxphiT + \varepsilon -V(t_0,\cdot) &> 0 \quad\text{on } X_0.
			\label{e:strict-inequalities-V-c}
		\end{align}
	\end{subequations}
	
	The inequalities~(\ref{e:strict-inequalities-V}a--c) are strict. Since $C_1-t^2 - \|x\|_2^2 \in \Gamma_\mu$ and $C_2-\|x\|_2^2 \in \Lambda_\mu$ for all integers $\mu$ by assumption, a straightforward corollary of Putinar's Positivstellensatz~\cite[Lemma 4.1]{Putinar1993} guarantees that inequalities~(\ref{e:strict-inequalities-V}a--c) can be proved with WSOS certificates. Precisely, there exists an integer $\mu'$ such that the polynomials in~(\ref{e:strict-inequalities-V}a,b) belong to $\Gamma_{\mu'}$, and the polynomial in~\cref{e:strict-inequalities-V-c} belongs to $\Lambda_{\mu'}$. We now set $d = \max\{\deg(V),\mu'\}$ and observe that $V$ is feasible for the righthand problem in~\cref{e:sos-opt} with $\lambda=\maxphiT+\varepsilon$ because $\Gamma_{\mu'} \subseteq \Gamma_d$, $\Lambda_{\mu'} \subseteq \Lambda_d$, and $V \in \R_d[t,x]$. This proves the claim that $\lambda_d^* \leq \maxphiT+\varepsilon$.
\end{proof}

The computational cost of solving WSOS optimization problems grows quickly as $d$ is raised. For instance, suppose the polynomials $f_1,\,\ldots,\,f_p$ and $h_1,\,\ldots,\,h_r$ all have the same degree $\omega$, and let $d_F:=d-1+\deg(F)$. Then, the time for standard primal-dual interior-point methods scales as $\mathcal{O}( L_1^{6.5} + (p+r)^{1.5} L_2^{6.5})$, where $L_1 = \binom{n+\lfloor d_F/2 \rfloor}{n}$ and $L_2 = \binom{n+\lfloor (d-\omega)/2 \rfloor}{n}$; see~\cite{Papp2019} and references therein for further details. \Cref{app:iterative-procedure} describes a way to improve bounds iteratively without raising $d$, but the improvement is small in the example tested. Poor computational scaling with increasing $d$ can be partly mitigated if symmetries of optimal $V$ can be anticipated and enforced in advance, leading to smaller SDPs. When the differential equations, the observable $\Phi$, and the sets $\Omega$ and $X_0$ all are invariant under a symmetry transformation, then the optimal bound is unchanged if the symmetry is imposed also on $V$ and the weights $\sigma_i$ and $\rho_i$. The next proposition formalizes these observations; its proof is a straightforward adaptation of a similar result in Appendix A of~\cite{Goluskin2019}, so we do not report it.

\begin{proposition}
	\label{th:symmetry-reduction}
	Let $A : \R^{n \times n}$ be an invertible matrix such that $A^k$ is the identity for some integer $k$. Assume that $F(t,A x) = A F(t,x)$, $\Phi$ is $A$-invariant in the sense that $\Phi(t,A x)= \Phi(t,x)$, and all polynomials defining $\Omega$ and $X_0$ are $A$-invariant also.
	If $V \in \V(\Omega)$ gives a bound $\maxphi \leq \lambda$, then there exits $\widehat{V}\in \V(\Omega)$ that is $A$-invariant and proves the same bound. Moreover, if the pair $(V,\lambda)$ satisfies the WSOS constraints in~\cref{e:sos-opt}, then so does the pair $(\widehat{V},\lambda)$ and there exist WSOS decompositions with $A$-invariant weights $\sigma_i$, $\rho_i$.
\end{proposition}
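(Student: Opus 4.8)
The plan is to use a group-averaging (Reynolds operator) argument. Since $A^k$ equals the identity, the matrices $\{I,A,\ldots,A^{k-1}\}$ form a cyclic group, and I would symmetrize any admissible $V$ by averaging over its orbit,
\begin{equation}
	\widehat{V}(t,x) := \frac{1}{k}\sum_{j=0}^{k-1} V\!\left(t, A^j x\right).
\end{equation}
The $A$-invariance of $\widehat{V}$ is then immediate: substituting $Ax$ for $x$ cyclically shifts the summation index, and the resulting $j=k$ term coincides with the $j=0$ term because $A^k x = x$. The rest of the proof is to check that $\widehat{V}$ inherits every property of $V$ used in each clause of the proposition, which reduces to verifying that the relevant operations commute with the averaging.

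First I would show $\widehat{V}\in\V(\Omega)$ and that it proves the same bound. The one computation requiring care is that the Lie derivative transforms covariantly under the substitution $x\mapsto A^jx$. Writing $V_j(t,x):=V(t,A^jx)$, the chain rule gives $\nabla_x V_j(t,x)=(A^j)^{\!\top}(\nabla_x V)(t,A^jx)$, so that $F(t,x)\cdot\nabla_x V_j(t,x)=\big(A^jF(t,x)\big)\cdot(\nabla_x V)(t,A^jx)$; the equivariance hypothesis $F(t,Ax)=AF(t,x)$ iterates to $A^jF(t,x)=F(t,A^jx)$, whence $\mathcal{L}V_j(t,x)=(\mathcal{L}V)(t,A^jx)$. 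Because the polynomials defining $\Omega$ are $A$-invariant, $(t,x)\in\Omega$ implies $(t,A^jx)\in\Omega$, so each term $(\mathcal{L}V)(t,A^jx)\le0$ and averaging yields \cref{e:cond1} for $\widehat{V}$. For \cref{e:cond2} I would use the $A$-invariance of $\Phi$ to write $\Phi(t,x)=\tfrac1k\sum_j\Phi(t,A^jx)$, so that $\Phi-\widehat{V}=\tfrac1k\sum_j(\Phi-V)(t,A^jx)\le0$ on $\Omega$. Finally, since $X_0$ is $A$-invariant each $A^jx_0\in X_0$, hence $\widehat{V}(t_0,x_0)\le\sup_{x_0\in X_0}V(t_0,x_0)\le\lambda$, so $\widehat{V}$ proves the same bound $\maxphi\le\lambda$.

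For the WSOS statement I would average a given certificate term by term. If $-\partial_t V-F\cdot\nabla_x V=\sigma_0+\sum_i h_i\sigma_i+\sum_i\ell_i\rho_i$ is a decomposition in $\Gamma_{d-1+\deg(F)}$, then evaluating at $A^jx$ and pulling the $A$-invariant weights $h_i,\ell_i$ outside gives $(-\partial_t V-F\cdot\nabla_x V)(t,A^jx)=\sigma_0(t,A^jx)+\sum_i h_i(t,x)\sigma_i(t,A^jx)+\sum_i\ell_i(t,x)\rho_i(t,A^jx)$. Averaging over $j$, and using the covariance identity of the previous step to recognize the left side as $-\partial_t\widehat{V}-F\cdot\nabla_x\widehat{V}$, produces a decomposition with averaged multipliers $\widehat{\sigma}_i:=\tfrac1k\sum_j\sigma_i(\cdot,A^j\cdot)$ and $\widehat{\rho}_i$. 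Composing with the invertible linear map $A^j$ preserves total degree and sends a sum of squares $\sum_l q_l^2$ to $\sum_l q_l(\cdot,A^j\cdot)^2$, again a sum of squares; since a finite sum of SOS polynomials is SOS, each $\widehat{\sigma}_i$ lies in the same cone $\Sigma_{\mu-\deg(h_i)}[t,x]$ as $\sigma_i$, and both $\widehat{\sigma}_i$ and $\widehat{\rho}_i$ are $A$-invariant by the reindexing argument. Hence $-\partial_t\widehat{V}-F\cdot\nabla_x\widehat{V}\in\Gamma_{d-1+\deg(F)}$ with $A$-invariant weights. The identical averaging, using $A$-invariance of $\Phi$ for the second constraint and of the polynomials $f_i,g_i$ defining $X_0$ for the third, shows $\widehat{V}-\Phi\in\Gamma_d$ and $\lambda-\widehat{V}(t_0,\cdot)\in\Lambda_d$, each with $A$-invariant weights.

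The substantive points, rather than genuine obstacles, are the covariance identity $\mathcal{L}V_j=(\mathcal{L}V)(\cdot,A^j\cdot)$, which is where the equivariance of $F$ is consumed, and the observation that composition with the invertible linear map $A^j$ preserves both total degree and the SOS property, so the averaged multipliers remain in precisely the cones required by \cref{e:sos-opt}. Everything else is bookkeeping on the cyclic sum, and I expect the whole argument to be a direct transcription of the averaging proof in Appendix A of \cite{Goluskin2019}, as the proposition already indicates.
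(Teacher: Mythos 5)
Your group-averaging argument is correct and is precisely the approach the paper intends: it omits the proof, stating only that it is a straightforward adaptation of the symmetrization argument in Appendix A of~\cite{Goluskin2019}, which is exactly the Reynolds-operator averaging you carry out. All the key steps check out, in particular the covariance identity $\mathcal{L}V_j(t,x)=(\mathcal{L}V)(t,A^jx)$ via the equivariance of $F$, and the fact that composition with the invertible linear map $A^j$ preserves both degree and the SOS property so the averaged multipliers stay in the required cones.
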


We conclude this section with three computational examples. The first two demonstrate that SOS optimization can give extremely good bounds on both $\maxphiT$ and $\maxphiinf$ in practice, even when the assumptions of \cref{th:strong-duality,th:sos-convergence} do not hold. The first example also illustrates the approximation of optimal trajectories described in \cref{s:optimal-trajectories}. The third example, on the other hand, reveals a potential pitfall of SOS optimization applied to bounding $\maxphiinf$ for systems with periodic orbits: infeasible problems may appear to be solved successfully due to unavoidably finite tolerances in SDP solvers.

\begin{example}
	\label{ex:sos-2d-example}
	\belowpdfbookmark{Example~\ref{ex:sos-2d-example}}{bookmark:sos-2d-example}
	
	Consider the nonlinear autonomous ODE system
	\begin{equation}
		\label{e:ex-nonnormal-2d}
		\begin{bmatrix}
			\dot{x}_1 \\ \dot{x}_2
		\end{bmatrix}
		= \begin{bmatrix}
			0.2 x_1 + x_2 - x_2(x_1^2 + x_2^2)\\
			-0.4 x_2 + x_1(x_1^2 + x_2^2)
		\end{bmatrix},
	\end{equation}
	which is symmetric under $x \mapsto -x$. As shown in \cref{f:sos-2d-example-phase-portrait}(a), the system has a saddle point at the origin and a symmetry-related pair of attracting equilibria. Let $X_0 = \{x: \|x\|_2^2=0.25\}$. Aside from two points on the stable manifold of the origin, all points in $X_0$ produce trajectories that eventually spiral outwards towards the attractors, as shown in \cref{f:sos-2d-example-phase-portrait}(b). 
	
	\begin{figure}
		\centering
		\includegraphics[scale=1]{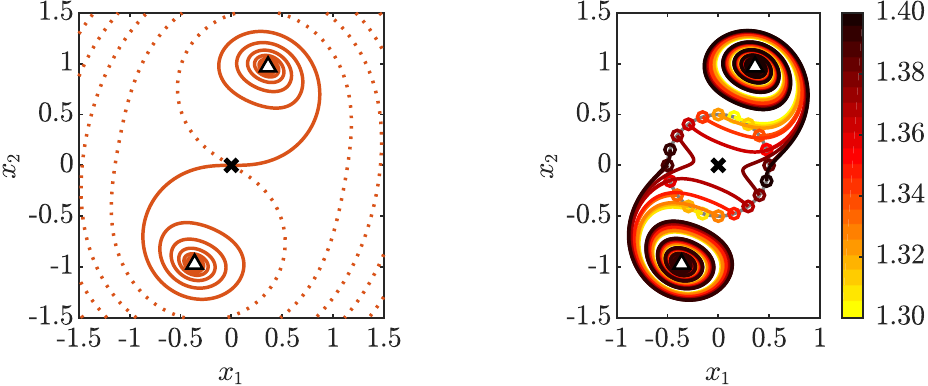}\\[-1ex]
		\begin{tikzpicture}[overlay]
		\node[fill=white] at (-2.6,-0.025) {\footnotesize(a)};
		\node[fill=white] at (2.65,-0.025) {\footnotesize(b)};
		\end{tikzpicture}
		\caption{(a) Phase portrait of the ODE~\cref{e:ex-nonnormal-2d} showing the attracting equilibria (\mytriangle{black}), the saddle (\mycross{black}), and the saddle's unstable ({\color{matlabred}\solidrule}) and stable ({\color{matlabred}\dottedrule}) manifolds. (b) Sample trajectories starting from the circle $\|x\|_2^2=0.25$. Small circles mark the initial conditions. Colors indicate the maximum value of $\Phi=\|x\|_2^2$ along each trajectory.}	
		\label{f:sos-2d-example-phase-portrait}
	\end{figure}
	\begin{figure}[t]
		\centering
		\includegraphics[scale=1]{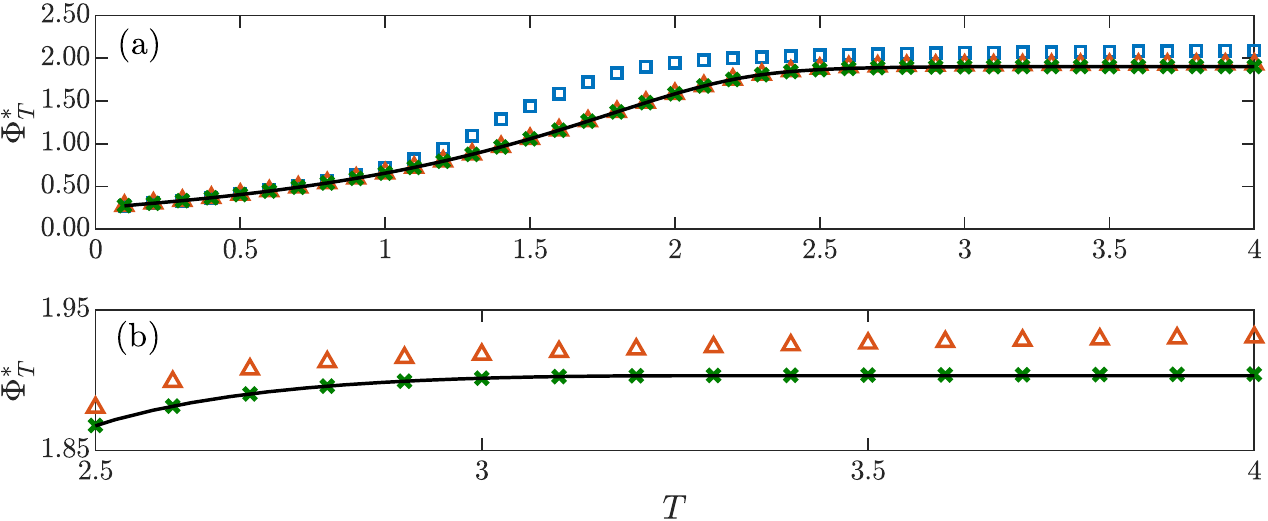}
		\caption{
			(a) Upper bounds on $\maxphiT$ in \cref{ex:sos-2d-example} for various time horizons $T$, computed using auxiliary functions $V(t,x)$ with polynomial degrees 4~(\mysquare{matlabblue}), 6~(\mytriangle{matlabred}), and 8~(\mycross{matlabgreen}). Lower bounds on $\maxphiT$ found by maximizing $\Phi[x(T\given 0, x_0)]$ over $x_0$ using adjoint optimization are also plotted~(\solidrule).
			(b) Detailed view of part of panel (a).}
		\label{f:sos-2d-example-finite-T-bounds}
	\end{figure}
	
	Using SOS optimization, we have computed upper bounds on the value of $\Phi(x)=\|x\|_2^2$ among all trajectories starting from $X_0$, for both finite and infinite time horizons. For simplicity we considered only global auxiliary functions, meaning we used $\Omega = [0,T] \times \R^2$ and $\Omega = [0,\infty) \times \R^2$ to solve~\cref{e:sos-opt} in the finite- and infinite-time cases, respectively. Since both choices of $\Omega$ and the set of initial conditions $X_0 = \{x: \|x\|_2^2=0.25\}$ share the same symmetry as~\cref{e:ex-nonnormal-2d}, we applied \cref{th:symmetry-reduction} to reduce the cost of solving~\cref{e:sos-opt}. Our implementation used \yalmip\ to reformulate~\cref{e:sos-opt} into an SDP, which was solved with \mosek.
	
	\Cref{f:sos-2d-example-finite-T-bounds} shows upper bounds on $\maxphiT$ that we computed for a range of time horizons $T$ by solving~\cref{e:sos-opt} with time-dependent polynomial $V$ of degrees $d=4$, 6, and 8. Also plotted in the figure are lower bounds on $\maxphiT$, found by searching among initial conditions using adjoint optimization. The close agreement with our upper bounds shows that the degree-8 bounds are very close to sharp, and that adjoint optimization likely has found the globally optimal initial conditions. We find that $\maxphiT = \maxphiinf \approx1.90318$ for all $T\ge3.2604$, indicating that $\Phi$ attains its maximum over all time when $T\approx 3.2604$.

	\begin{table}[t]
		\caption{Upper bounds on $\maxphiT$ and $\maxphiinf$ for \cref{ex:sos-2d-example}, computed by solving~\cref{e:sos-opt}. The bounds for $\maxphiT$ and $\maxphiinf$ were computed using time-dependent and time-independent $V$, respectively. Lower bounds are implied by the maximum of $\Phi$ on particular trajectories, whose initial conditions were found by adjoint optimization.}
		\label{t:sos-2d-example-bounds}
		\centering
		\small
		\begin{tabular}{c c c c c}
			\toprule
			& $\deg(V)$ & $T=2$ & $T=3$ & $T=\infty$ \\[2pt]
			\hline
			Upper bounds&4  	& 1.948016 	& 2.062952 	& 2.194343 \\
			&6   	& 1.584910 	&  1.918262 	& 1.942396 \\
			&8   	& 1.584055 	& 1.901411 	& 1.931330 \\
			&10  	& "			& 1.901409 	& 1.916228 \\
			&12  	& "           		& " 		 	& 1.903525 \\
			&14  	& "           		& " 	     		& 1.903448 \\
			&16  	& "    	       	& " 		 	& 1.903185 \\
			&18  	& " 	         	& " 		     	& 1.903181 \\	
			\hline
			Lower bounds && 1.584055 & 1.901409 & 1.903178 \\
			\bottomrule
		\end{tabular}
	\end{table}
	
	\Cref{t:sos-2d-example-bounds} reports upper bounds on $\maxphiT$ computed with time-dependent $V$ up to degree 18 for $T=2$ and $T=3$, as well as upper bounds on $\maxphiinf$. The infinite-time implementation was restricted to time-independent polynomial $V(x)$ because polynomial dependence on $t$ gave no improvement in preliminary computations. This restriction lowers the computational cost because the first two WSOS constraints in~\cref{e:sos-opt} are independent of time and reduce to standard SOS constraints on $\mathbb{R}^2$. The resulting bounds are excellent for each $T$ reported in \cref{t:sos-2d-example-bounds}. As the degree of $V$ is raised, the upper bounds on $\maxphi$ apparently converge to the lower bounds produced by adjoint optimization. Note that this convergence is not guaranteed by \cref{th:strong-duality,th:sos-convergence} because the domain $\Omega$ is not compact. 
	
	Finally, we illustrate how auxiliary functions can be used to localize optimal trajectories using the methods described in \cref{s:optimal-trajectories}. For a near-optimal $V$ we take the time-independent degree-$14$ auxiliary function that gives the upper bound $\lambda=1.903448$ reported in \cref{t:sos-2d-example-bounds}. Any trajectory that attains or exceeds a value $\lambda-\delta$ at some time $t^*$ must spend the interval $[t_0,t^*]$ inside the set $\mathcal{S}_\delta$ defined by \cref{e:Sdelta}. In the present example, the lower bound $1.903178\le\maxphi$ guarantees the existence of such trajectories for all $\delta\ge0.00027$. In general a good lower bound on $\maxphi$ may be lacking, in which case the sets $\mathcal{S}_\delta$ tell us where near-optimal trajectories must lie \emph{if} they exist. With this general situation in mind, \cref{f:sos-2d-example-bounding-sets}(a,b) show $\mathcal{S}_\delta$ for $\delta=0.01$ and $0.002$, along with the exactly optimal trajectories. The $\mathcal{S}_\delta$ sets localize the optimal trajectories increasingly well as $\delta$ is lowered, although they contain other parts of state space also. \Cref{f:sos-2d-example-bounding-sets}(c) shows the sets $\mathcal{R}_\varepsilon$, defined by \cref{e:Repsilon}, for $\varepsilon=0.008$ and 0.004. Each trajectory coming within $\delta=0.002$ of the upper bound, for example, cannot leave these $\mathcal{R}_\varepsilon$ for longer than $\delta/\varepsilon=0.25$ and $0.5$ time units, respectively, prior to any time at which $\Phi\ge\lambda-\delta$. The same is true of the intersections of these sets with $\mathcal{S}_\delta$, which are shown in \cref{f:sos-2d-example-bounding-sets}(d).
	\begin{figure}
		\centering
		\vspace*{4ex}
		\includegraphics[scale=1]{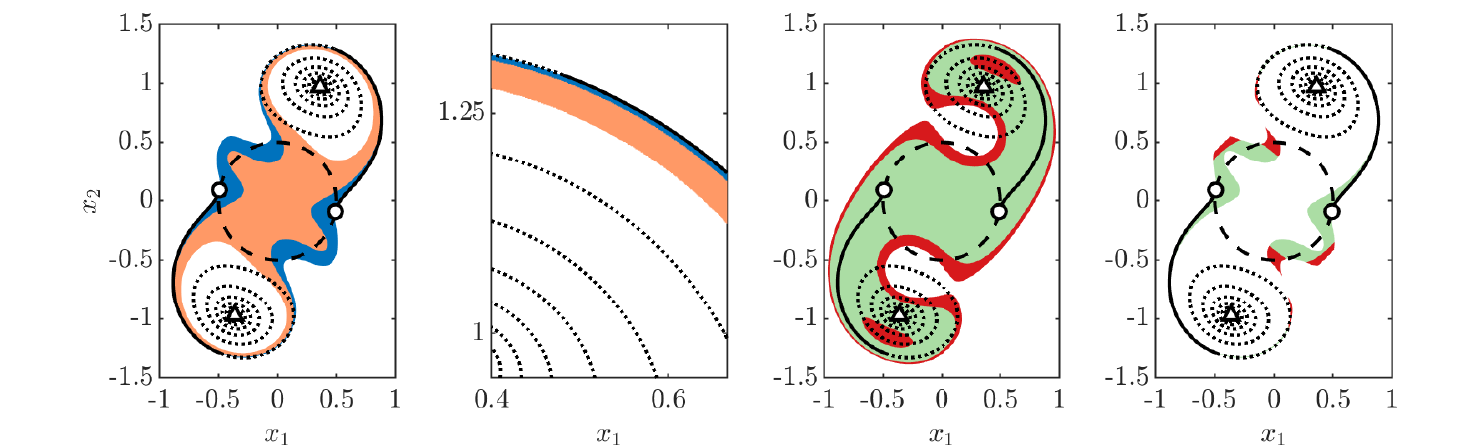}\\
		\begin{tikzpicture}[overlay]
		\node[fill=white] at (-4.75,-0.0) {\footnotesize(a)};
		\node[fill=white] at (-1.35,-0.0) {\footnotesize(b)};
		\node[fill=white] at (2.05,-0.0) {\footnotesize(c)};
		\node[fill=white] at (5.4,-0.0) {\footnotesize(d)};
		\draw[fill=matlabblue,draw=matlabblue] (-5.3,5.1) rectangle (-5,5.25);
		\draw[fill=matlaborange,draw=matlaborange] (-5.3,5.45) rectangle (-5,5.6);
		\node[anchor=west] at (-5,5.175) {\footnotesize$\mathcal{S}_{0.002}$};
		\node[anchor=west] at (-5,5.525) {\footnotesize$\mathcal{S}_{0.01}$};
		\draw[fill=matlabblue,draw=matlabblue] (-1.9,5.1) rectangle (-1.6,5.25);
		\draw[fill=matlaborange,draw=matlaborange] (-1.9,5.45) rectangle (-1.6,5.6);
		\node[anchor=west] at (-1.6,5.175) {\footnotesize$\mathcal{S}_{0.002}$};
		\node[anchor=west] at (-1.6,5.525) {\footnotesize$\mathcal{S}_{0.01}$};
		\draw[fill=matlabsafegreen,draw=matlabsafegreen] (1.4,5.1) rectangle (1.7,5.25);
		\draw[fill=matlabsafered,draw=matlabsafered] (1.4,5.45) rectangle (1.7,5.6);
		\node[anchor=west] at (1.7,5.175) {\footnotesize$\mathcal{R}_{0.004}$};
		\node[anchor=west] at (1.7,5.525) {\footnotesize$\mathcal{R}_{0.008}$};
		\draw[fill=matlabsafegreen,draw=matlabsafegreen] (4.2,5.1) rectangle (4.5,5.25);
		\draw[fill=matlabsafered,draw=matlabsafered] (4.2,5.45) rectangle (4.5,5.6);
		\node[anchor=west] at (4.5,5.175) {\footnotesize$\mathcal{S}_{0.002} \cap \mathcal{R}_{0.004}$};
		\node[anchor=west] at (4.5,5.525) {\footnotesize$\mathcal{S}_{0.002} \cap \mathcal{R}_{0.008}$};
		\end{tikzpicture}
		\\
		\caption{Sets approximating the trajectories that attain $\maxphiinf$ for \cref{ex:sos-2d-example}: 
			(a)~$\mathcal{S}_{0.01}$ and  $\mathcal{S}_{0.002}$. 
			(b)~Detail view of part of panel (a). 
			(c)~$\mathcal{R}_{0.008}$ and  $\mathcal{R}_{0.004}$. 
			(d)~$\mathcal{S}_{0.002}\cap \mathcal{R}_{0.008}$ and  $\mathcal{S}_{0.002} \cap \mathcal{R}_{0.004}$. 
			All sets were computed using the same degree-$14$ polynomial $V(x)$ that yields the nearly sharp bounds in \cref{t:sos-2d-example-bounds}. Also plotted are the attracting equilibria (\mytriangle{black}), the set of initial conditions $X_0$ ({\color{black}\dashedrule}), the optimal initial conditions (\mycirc{black}), and the optimal trajectories before (\solidrule) and after (\dottedrule) the point at which $\maxphiinf$ is attained.}
		\label{f:sos-2d-example-bounding-sets}
	\end{figure}
	\markendexample\end{example}

\begin{example}
	\label{ex:burgers-sos-example}
	\belowpdfbookmark{Example~\ref{ex:burgers-sos-example}}{bookmark:burgers-sos-example}
	
	Here we consider a $16$-dimensional ODE model obtained by projecting the Burgers equation~\cref{e:fractional-burgers} with ordinary diffusion ($\alpha=1$) onto modes $u_n(x) = \sqrt{2}\sin(2 n \pi x)$, $n=1,\,\ldots,\,16$. In other words, we substitute the expansion $u(x,t) = \sum_{m=1}^{16} a_m(t) u_m(x)$ into~\cref{e:fractional-burgers} with $\alpha=1$ and integrate the result against each $u_n(x)$ to derive $16$ nonlinear coupled ODEs for the amplitudes $a_1(t),\,\ldots,\,a_{16}(t)$. This gives
	\begin{equation}
		\label{e:burgers-truncated-ode}
		\dot{a}_n = -\left(2 \pi n\right)^2 a_n +  \sqrt{2} \pi n \left[ \sum_{m=1}^{16-n} a_m a_{m+n} - \frac12 \sum_{m=1}^{n-1}a_m a_{n-m} \right], \qquad n=1,\,\ldots,\,16.
	\end{equation}
	
	Let $a=(a_1,\,\ldots,\,a_{16})$ denote the state vector. Similarly to what is done for the PDE in \cref{ex:fractional-burgers}, we bound the projected enstrophy $\Phi(a) := 2 \pi^2 \sum_{n=1}^{16} n^2 a_n^2$ along trajectories with initial conditions in the set $X_0 = \{a \in \mathbb{R}^{16}\,:\, \Phi(a)=\Phi_0\}$, and we consider various values $\Phi_0$ of the initial enstrophy. We construct time-independent degree-$d$ polynomial $V$ of the form
	\begin{equation}
		\label{e:burgers-ode-V}
		V(a) = c \|a\|_2^d + P_{d-1}(a),
	\end{equation}
	where $d$ is even, $c$ is a tunable constant, and $P_{d-1}(a)$ is a tunable polynomial of degree $d-1$. Since the nonlinear terms in~\cref{e:burgers-truncated-ode} conserve the leading $\|a\|_2^d$ term, $\mathcal{L}V$ has the same even leading degree as $V$, which is necessary for {(\ref{e:V-conditions}a,b)} to hold over the global spacetime set $\Omega = [0,\infty)\times \R^{16}$. We also construct local $V$ of the form~\cref{e:burgers-ode-V} by imposing {(\ref{e:V-conditions}a,b)} only on the smaller spacetime set $\Omega = [0,\infty) \times X$ with
	\begin{equation}
		X := \left\{ a \in \R^{16}: \|a\|_2^2 \leq \frac{\Phi_0}{2\pi^2} \right\}.
	\end{equation}
	All trajectories starting from $X_0$ remain in $X$ because~\cref{e:burgers-truncated-ode} implies $\ddt \|a\|_2^2 = -4 \Phi(a) \leq 0$, so $\|a\|_2^2$ is bounded by its initial value, and $\|a\|_2^2 \leq \frac{1}{2\pi^2} \Phi(a)$ pointwise.
	
	\begin{figure}[t]
		\centering
		\includegraphics[scale=0.9]{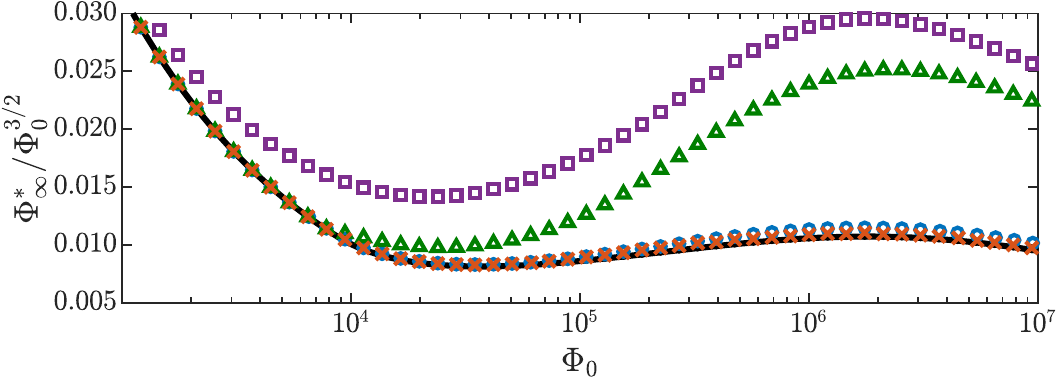}
		\caption{Bounds on $\maxphiinf$ for~\cref{e:burgers-truncated-ode} computed with both global and local polynomial auxiliary functions $V$ of the form~\cref{e:burgers-ode-V} for $d=4$ (\mysquare{matlabpurple}~global, \mytriangle{matlabgreen}~local) and $d=6$ (\mycirc{matlabblue}~global, \mycross{matlabred}~local). Also plotted are lower bounds on $\maxphiinf$ obtained with adjoint optimization (\solidrule). All results are normalized by $\Phi_0^{3/2}$, the expected scaling at large~$\Phi_0$~\cite{Ayala2011}.}
		\label{f:burgers-bounds}
	\end{figure}
	
	\Cref{f:burgers-bounds} shows upper bounds on $\maxphiinf$ computed for $\Phi_0$ values spanning four orders of magnitude using both global and local $V$ of degrees 4 and 6. Also shown are lower bounds obtained using adjoint optimization. (Note that the 16-mode truncation~\cref{e:burgers-ode-V} accurately resolves Burgers equation only in cases with $\Phi_0\lesssim2\cdot10^5$.) We used \spotless\ and \mosek\ to solve~\cref{e:sos-opt} and applied \cref{th:symmetry-reduction} to exploit symmetry under the transformation $a_n \mapsto (-1)^{n} a_n$. At each $\Phi_0$ value, constructing quartic $V$ required approximately 60 seconds on 4 cores with 16GB of memory. Local quartic $V$ produce better bounds than global ones, the results obtained with the former being within 1\% of the lower bounds from adjoint optimization for $\Phi_0 \lesssim 8000$. The results improve significantly with sextic $V$: for all tested $\Phi_0$, the upper bounds produced by global and local sextic $V$ are within 9\% and 5\% of the adjoint optimization results, respectively. Constructing sextic $V$ at a single $\Phi_0$ value required 16 hours on a 12-core workstation with 48GB of memory, which is significantly more expensive than adjoint optimization. However, we stress that auxiliary functions yield \textit{upper} bounds on $\maxphiinf$, while adjoint optimization gives only \textit{lower} bounds on $\maxphiinf$, so the two approaches give different and complementary results.
	\markendexample\end{example}

It is evident that SOS optimization can produce excellent bounds on extreme events given enough computational resources, but care must be taken to assess whether numerical results can be trusted. As observed already in the context of SOS optimization~\cite{Waki2012}, numerical SDP solvers can return solutions that appear to be correct but are provably not so. The next example shows that this issue can arise when bounding $\maxphiinf$ in systems with periodic orbits.

\begin{example}
	\label{ss:vdp}
	\belowpdfbookmark{Example~\ref{ss:vdp}}{bookmark:vdp} 
	
	Consider a scaled version of the van der Pol oscillator~\cite{VanderPol1926},
	\begin{equation}
		\label{e:vdp}
		\begin{bmatrix}
			\dot{x}_1\\ \dot{x}_2
		\end{bmatrix}
		= \begin{bmatrix}
			x_2\\
			(1-9x_1^2)x_2-x_1
		\end{bmatrix},
	\end{equation}
	which has a limit cycle attracting all trajectories except the unstable equilibrium at the origin (see \cref{f:vdp}). Let $\Phi = \|x\|_2^2$ be the observable of interest. We seek bounds on $\maxphiinf$ along trajectories starting from the circle $\|x\|_2^2 = 0.04$. All such trajectories approach the limit cycle from the inside, so $\maxphiinf$ coincides with the pointwise maximum of $\Phi$ on the limit cycle. Maximizing $\Phi$ numerically along the limit cycle yields $\maxphiinf \approx 0.889856$.
	
	\begin{figure}
		\centering
		\includegraphics[scale=1]{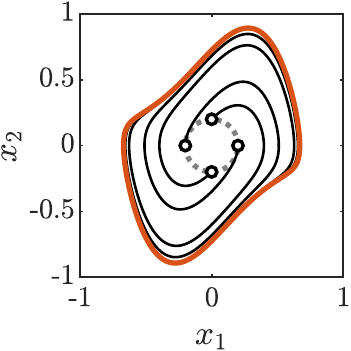}
		\caption{Limit cycle ({\color{matlabred}\solidrule}) for the scaled van der Pol oscillator~\cref{e:vdp}. Also plotted are trajectories (\solidrule) with initial conditions (\mycirc{black}) on the circle $\|x\|_2^2=0.04$ ({\color{matlabgray}\dashedrule}).}
		\label{f:vdp}
	\end{figure}
	\begin{table}
		\caption{Parameters for \sdpagmp\ used in \cref{ss:vdp} to produce an invalid degree-22 auxiliary function for the scaled van der Pol oscillator. A description of each parameter can be found in~\cite{sdpagmp}.}
		\label{t:sdpa-gmp-parameters}
		\centering
		\small
		\begin{tabular}{rl c rl c rl c rl}
			\toprule
			epsilonStar & $10^{-25}$ &&  betaStar & 0.1 && 	lowerBound & -$10^{25}$ && maxIteration & 200\\
			epsilonDash & $10^{-25}$ &&  betaBar & 0.3 && upperBound & \phantom{-}$10^{25}$ &&   precision & 200\\
			lambdaStar & $10^4$ && gammaStar & 0.7 && omegaStar & \phantom{-}2\\
			\bottomrule
		\end{tabular}
	\end{table}
	
	We implemented~\cref{e:sos-opt} with \yalmip\ using a time-independent polynomial auxiliary function $V(x)$ of degree 22. To confirm that difficulties were not easily avoided by increasing precision, we solved the resulting SDP in multiple precision arithmetic using the solver \sdpagmp\ v.7.1.3. The solver parameters we used are listed in \cref{t:sdpa-gmp-parameters} in order to ensure that our results are reproducible; see~\cite{sdpagmp} for the meaning of each parameter. The solver terminated successfully after 95 iterations, reporting no error and returning the upper bound $\maxphiinf \leq 0.956911$. Although this bound is true, it reflects an invalid SOS solution because no time-independent polynomial $V$ of any degree can satisfy~\cref{e:cond1}. To see this, suppose that~\cref{e:cond1} holds, so $V$ cannot increase along trajectories of~\cref{e:vdp}. In particular, if $x(t)$ lies on the limit cycle and $\tau$ is the period, then for all $\alpha \in (0,1)$,
	\begin{equation}
		V[x(t)] \geq V[x(t+\alpha \tau)] \geq V[x(t+\tau)] = V[x(t)].
	\end{equation}
	Thus, time-independent $V$ giving finite bounds on $\maxphiinf$ must be constant on the limit cycle. This is impossible if $V$ is polynomial because the limit cycle is not an algebraic curve~\cite{Odani1995}.
	
	There are two possible reasons why the SDP solver does not detect that the problem is infeasible despite the use of multiple precision. The first is that inevitable roundoff errors mean that our bound does not apply to~\cref{e:vdp}, but to a slightly perturbed system whose limit cycle \textit{is} an algebraic curve. The second possibility, which seems more likely, is that although no time-independent polynomial $V$ is feasible, there exists a feasible nonpolynomial $V$ that can be approximated accurately near the limit cycle by a degree-22 polynomial. In particular, the approximation error is smaller than the termination tolerances used by the solver, which therefore returns a solution that is not feasible but very nearly so. This interpretation is supported by the fact that \sdpagmp\ issues a warning of infeasibility when its tolerances are tightened by lowering the values of parameters epsilonDash and epsilonStar to $10^{-30}$.
	\markendexample\end{example}

\section{Extensions}
\label{s:extensions}

The framework for bounding extreme events presented in \cref{s:bounds-with-afs} can be extended in several ways. {Here we briefly summarize two extensions. Both are covered by the measure-theoretic approach of~\cite{Vinter1978,Vinter1978a,Lewis1980,Vinter1993}, but we give a more direct derivation.}

The first extension applies when upper bounds are sought on the maximum of $\Phi$ at a fixed finite time $T$, rather than its maximum over the time interval $[0,T]$. Such bounds can be proved by relaxing inequality~\cref{e:cond2} to require that $V$ bounds $\Phi$ only at time $T$. 

A second extension lets extreme events be defined using integrals over trajectories in addition to instantaneous values. Precisely, suppose the quantity we want to bound from above is
\begin{equation}
	\label{e:integral-cost}
	\sup_{\substack{x_0 \in X_0\\t \in \T}} \left\{\Phi[t,x(t;t_0,x_0)] + \int_{t_0}^t \Psi[\xi,x(\xi;t_0,x_0)] \dxi \right\}
\end{equation}
with chosen $\Phi$ and $\Psi$. One way to proceed is to augment the original dynamical system~\cref{e:system} with the scalar ODE $\dot{z} = \Psi(t,x)$, $z(t_0)=0$. Bounding~\cref{e:integral-cost} along trajectories of the original system is equivalent to bounding the maximum of $\Phi(t,x)+z$ pointwise in time along trajectories of the augmented system, and this can be done with the methods described in the previous sections. Another way to bound~\cref{e:integral-cost}, without introducing an extra ODE, is to replace condition~\cref{e:cond1} with
\begin{equation}
	\label{e:condition-integral}
	\mathcal{L}V(t,x) + \Psi(t,x) \leq 0 \quad\forall (t,x) \in \Omega.
\end{equation}
Minor modification to the argument leading to~\cref{e:weak-duality} proves that
\begin{equation}
	\label{e:bounds-integral}
	\sup_{\substack{x_0 \in X_0\\t \in \T}} \left\{\Phi[t,x(t;t_0,x_0)] + \int_{t_0}^t \Psi[\xi,x(\xi;t_0,x_0)] \dxi \right\} \leq 
	\inf_{\subalign{V:\,&\cref{e:cond2}\\&\cref{e:condition-integral}}} \sup_{x_0 \in X_0} \, V(t_0,x_0).
\end{equation}
As in \cref{e:weak-duality}, the righthand minimization is a convex problem and can be tackled computationally using SOS optimization for polynomial ODEs when $\Phi$ and $\Psi$ are polynomial. Analogues of~\cref{th:strong-duality,th:sos-convergence} for~\cref{e:bounds-integral} hold if $\Psi$ is continuous.

\section{Conclusions}
\label{s:conclusion}

We have {discussed} a convex framework for constructing \textit{a priori} bounds on extreme events in nonlinear dynamical systems governed by ODEs or PDEs. Precisely, we have {described} how to bound from above the maximum value $\maxphi$ of an observable $\Phi(t,x)$ over a given finite or infinite time interval, among all trajectories that start from a given initial set. This approach, {which is a particular case of general relaxation frameworks for optimal control and optimal stopping problems~\cite{Lewis1980,Cho2002}}, relies on the construction of auxiliary functions $V(t,x)$ that decay along trajectories and bound $\Phi$ pointwise from above. These constraints amount to the pointwise inequalities (\ref{e:V-conditions}a,b) in time and state space, which can be either imposed globally or imposed locally on any spacetime set that contains all trajectories of interest. Suitable global or local $V$ can be constructed without knowing any system trajectories, so $\maxphi$ can be bounded above even when trajectories are very complicated. We have given a range of ODE examples in which analytical or computational constructions give very good and sometimes sharp bounds. As a PDE example, we have proved analytical upper bounds on a quantity called fractional enstrophy for solutions to the one-dimensional Burgers equation with fractional diffusion.

The convex minimization of upper bounds on $\maxphi$ over global or local auxiliary functions is dual to the non-convex maximization of $\Phi$ along trajectories. In the case of ODEs and local auxiliary functions, \cref{th:strong-duality}, {which is a corollary of Theorem 2.1 and equation (5.3) in~\cite{Lewis1980}, guarantees that this duality is strong when the time interval is finite and the ODE satisfies certain continuity and compactness assumptions.} This means that the infimum over bounds is equal to the maximum over trajectories, so there exist $V$ proving arbitrarily sharp bounds on $\maxphi$. Further, strong duality holds in several of our ODE examples to which the assumptions of \cref{th:strong-duality} do not apply, including formulations with global $V$ or infinite time horizons. {However, neither the proofs in~\cite{Lewis1980} nor our alternative proof in \cref{s:direct-proof-strong-duality} can be easily extended to these cases because they rely on compactness, and} we have given counterexamples to strong duality with infinite time horizon even when trajectories remain in a compact set. Better characterizing the dynamical systems for which strong duality holds remains an open challenge.

Regardless of whether duality is weak or strong for a given dynamical system, constructing auxiliary functions that yield good bounds often demands ingenuity. Fortunately, as described in \cref{s:sos-optimization}, computational methods of sum-of-squares (SOS) optimization can be applied in the case of polynomial ODEs with polynomial $\Phi$. Moreover, \cref{th:sos-convergence} guarantees that if strong duality and mild compactness assumptions hold, then bounds computed by solving the SOS optimization problem~\cref{e:sos-opt} become sharp as the polynomial degree of the auxiliary function $V$ is raised.
In practice, computational cost can become prohibitive as either the dimension of the ODE system or the polynomial degree of $V$ increases, at least with the standard approach to SOS optimization wherein generic semidefinite programs are solved by second-order symmetric interior-point algorithms. For instance, given a 10-dimensional ODE system with no symmetries to exploit, the degree of $V$ is currently limited to about 12 on a large-memory computer. Larger problems may be tackled using specialized nonsymmetric interior-point~\cite{Papp2019} or first-order algorithms~\cite{Zheng2017csl, Zheng2018}. One also could replace the weighted SOS constraints in~\cref{e:sos-opt} with stronger constraints that may give more conservative bounds at less computational expense~\cite{AAAhmadi2015, AAAhmadi2019}.

In the case of PDEs, the bounding framework of \cref{s:bounds-with-afs} can produce valuable bounds, as in \cref{ex:fractional-burgers}, but theoretical results and computational tools are {lacking. \Cref{th:strong-duality}, which guarantees arbitrarily sharp bounds for many ODEs, does not apply to PDEs, nor can we directly apply the computational methods of \cref{s:sos-optimization} that work well for polynomial ODEs.} On the theoretical side, guarantees that feasible auxiliary functions exist for PDEs would be of great interest, not least because bounds on certain extreme events can preclude loss of regularity. {Statements formally dual to results in~\cite{Cho2002} for optimal stopping problems would imply that near-optimal auxiliary functions exist for autonomous PDEs, at least when extreme events occur at finite time, but such statements have not yet been proved.} On the computational side, constructions of optimal $V$ for PDEs would be very valuable, both to guide rigorous analysis and to improve on conservative bounds proved by hand. Methods of SOS optimization can be applied to PDEs in two ways. The first is to approximate the PDE as an ODE system and bound the error this incurs, obtaining an ``uncertain'' ODE system to which standard SOS techniques can be applied~\cite{Goulart2012, Chernyshenko2014, Huang2015, Goluskin2019}. The second approach is to work directly with the PDE using either the integral inequality methods of~\cite{Valmorbida2015tac, Valmorbida2015intsostools, Valmorbida2015cdc} or the moment relaxation techniques of~\cite{Korda2018,Marx2018}. These strategies have been used to study PDE stability, time averages, and optimal control, but they are in relatively early development. They have not yet been applied to extreme events as studied here, although the method in~\cite{Korda2018} applies to extreme behavior at a fixed time and could be extended to time intervals. It remains to be seen whether any of these strategies can numerically optimize auxiliary functions for PDEs of interest at reasonable computational cost, but recent advances in optimization-based formulations and corresponding numerical algorithms give us hope that this will be possible in the near future.

\currentpdfbookmark{Acknowledgements}{bookmark:acknowledgements}
\section*{Acknowledgments}
We are indebted to Andrew Wynn, Sergei Chernyshenko, Ian Tobasco, and Charles Doering, who offered many insightful comments on this work. We also thank the anonymous referees for comments that considerably improved the original version of this work.

\appendix

\section{Optimality of the quadratic \texorpdfstring{$V$}{V} in~\texorpdfstring{\cref{ex:nonautonomous-example-sos}}
	{Example 2.1}}
\label{app:nonautomonous-system-example-optimality}
The $V$ given by~\cref{e:nonautomonous-system-example-V} is optimal among all quadratic global auxiliary functions that produce upper bounds on $\Phi=x_1$ along the trajectory starting from the point $(0,1)$. 
To prove this, consider a general quadratic global auxiliary function,
\begin{multline}
	V(t,x_1,x_2) = C_0 + C_1 x_1 + C_2 x_2 + C_3 t 
	\\+ C_4 x_1^2 + C_5 x_2^2 + C_6 t^2 
	+ 2C_7 x_1 x_2 + 2C_8 t x_1 + 2C_9 t x_2.
\end{multline}
The coefficients $C_0,\,\ldots,\,C_9$ must be chosen to minimize the bound $\maxphi \leq V(0,0,1)$ implied by~\cref{e:weak-duality-incomplete}, subject to the inequality constraints~(\ref{e:V-conditions}a,b). Differentiating $V$ along solutions of~\cref{e:nonautomonous-system-example} yields
\begin{align}
	\mathcal{L}V(t,x_1,x_2) =\; 
	&C_3 + 
	(2C_9 - C_2) x_2 + (2C_8 - 0.1C_1) x_1 + 2C_6 t
	+ (C_2 - 0.2C_4)x_1^2
	\\
	&-(2.2C_7 + C_1)x_1 x_2 - 2C_5 x_2^2
	+(C_1-2C_9) t x_2 - (C_2+0.2C_8) t x_1
	\notag\\
	&+2C_7 x_1^3 - 2C_7 x_1 x_2^2 + 2(C_5 - C_4) x_1^2 x_2 + 2 C_7 t x_2^2
	\notag\\
	&+ 2(C_4-C_5-C_8) t x_1 x_2 + 2(C_9-C_7) t x_1^2 - 2 C_9 t^2 x_1 + 2C_8 t^2 x_2.
	\notag
\end{align}
In order for this expression to be nonpositive for all $(x_1,x_2)\in\R^2$ and $t\geq 0$, as required by~\cref{e:cond1}, the indefinite cubic terms and the quadratic terms proportional to $t$ must vanish. This forces us to set $C_1,C_2,C_7,C_8,C_9=0$ and $C_4=C_5$, so the expressions for $V$ and $\mathcal{L}V$ reduce to
\begin{subequations}
	\begin{gather}
		V(t,x_1,x_2) = C_0 + C_3 t + C_6 t^2 + C_5 \left( x_1^2 + x_2^2 \right),\\
		\mathcal{L}V(t,x_1,x_2) = C_3 + 2 C_6 t - 0.2C_5x_1^2- 2C_5 x_2^2.
	\end{gather}
\end{subequations}
Condition~\cref{e:cond1}, which requires $\mathcal{L}V\le0$, is satisfied only if $C_3,C_6\leq 0$ and $C_5 \geq 0$. With $\Phi=x_1$ condition~\cref{e:cond2} becomes
$C_0 - x_1 + C_5 x_1^2  + C_3 t + C_6 t^2 + C_5 x_2^2 \geq 0$, which in turn requires $4 C_0 C_5 \geq 1$. Minimizing the bound
$\maxphi \leq V(0,0,1) = C_0 + C_5$ under these constraints yields $C_0,C_5 = \tfrac12$, and we are free to choose any $C_3,C_6\leq 0$. Any such $V$ is optimal, including~\cref{e:nonautomonous-system-example-V} which results from choosing $C_3,C_6=0$.

\section{Sharp bounds for nonzero initial conditions in \texorpdfstring{\cref{ex:ex-1d-unbounded-trajectories}}{example~\ref{ex:ex-1d-unbounded-trajectories}}}
\label{app:sharp-bounds-ex-x2}
Auxiliary functions that give sharp bounds on $\Phi=4x/(1+4x^2)$ along single trajectories of the ODE~\cref{e:xdot=x2} exist for every nonzero initial condition $x_0$. Here we give global $V$, which also are local $V$ on any $\Omega$ in which trajectories remain. In the $x_0>0$ case, a global $V$ giving sharp upper bounds on $\maxphi_\infty$ is
\begin{equation}
	\label{e:V-example-blowup-solutions}
	V(t,x) = \begin{cases}
		1, &x \leq \tfrac12,\\
		\dfrac{4x}{1+4x^2}, &x>\tfrac12.
	\end{cases}
\end{equation}
This function is continuously differentiable and satisfies~(\ref{e:V-conditions}a,b). It is optimal because the bound on $\maxphi_\infty$ implied by~\cref{e:weak-duality} with $X_0 = \{x_0\}$ is
\begin{equation}
	\maxphi_\infty \leq V(0,x_0) = \begin{cases}
		1, &0 < x_0 \leq \tfrac12,\\
		\dfrac{4x_0}{1+4x_0^2}, &x_0>\tfrac12,
	\end{cases}
\end{equation}
which coincides with the expression~\cref{e:maxphi-example-unbounded-trajectories} for $\maxphi_\infty$.

The $x_0<0$ case requires a more complicated construction. An argument similar to that in \cref{ex:ex-1d-unbounded-trajectories} shows that any global optimal $V$ providing the sharp bound $\maxphi_\infty \leq 0$ must be time-dependent. The same is true for local $V$ unless $\Omega \subseteq [0,\infty) \times (-\infty,0]$, in which case $V=0$ is optimal. 
To construct a time-dependent global $V$ that is optimal for $X_0 = \{x_0\}$ with $x_0$ negative, we note that $\beta(t)=x_0/(1-x_0 t)$ solves the ODE~\cref{e:xdot=x2} with initial condition $x(0)=x_0$. Observe that $\beta(0)=x_0$, $\beta(t)<0$, and $\beta'(t)=\beta(t)^2$. Consider
\begin{equation}
	\rho(x) = \begin{cases}
		\displaystyle\exp\left(1-\frac{1}{1-x^2}\right), &\abs{x}<1,\\
		0, & \abs{x} \geq 1,
	\end{cases}	
\end{equation}
which is a smooth nonnegative function. We claim that
\begin{equation}
	\label{e:xdot=x2-Vopt}
	V(t,x) := \begin{cases}
		\displaystyle\rho\!\left(\frac{x}{\beta(t)}\right), &x\leq 0,\\
		1, &x>0
	\end{cases}
\end{equation}
is an optimal global auxiliary function. This $V$ implies the sharp bound $\maxphi_\infty \leq V(0,x_0)=0$ since $\rho(1)=0$, so it remains only to check~(\ref{e:V-conditions}a,b).
Inequality~\cref{e:cond2} holds because $\Phi$ is nonpositive for $x\leq 0$ and is bounded above by 1 pointwise. To verify~\cref{e:cond1}, we consider positive and nonpositive $x$ separately. The $x>0$ case is immediate because $\mathcal{L}V(t,x)=0$. For $x\leq 0$, a straightforward calculation using $\beta'(t) = \beta(t)^2$ gives
\begin{align}
	\label{e:xdot=x2-LV}
	\mathcal{L}V(t,x)
	= \partial_t V + x^2 \,\partial_x V
	= \frac{x}{\beta(t)}\left[x - \beta(t) \right]\, \rho'\!\left(\frac{x}{\beta(t)}\right).
\end{align}

\noindent
Observe that $\rho'(s)$ vanishes if $s=0$ or $\abs{s}\geq 1$, so $\mathcal{L}V=0$ if $x\leq \beta(t)$ or $x=0$. When $\beta(t)<x<0$ instead, $\mathcal{L}V<0$ because the first two factors in~\cref{e:xdot=x2-LV} are positive, while $\rho'(s)$ is negative for $0<s<1$. Combining these observations shows that $\mathcal{L}V\leq 0$ for all times if $x\leq 0$.  \Cref{f:sketch-optimal-V} illustrates the behavior of $V$ and $\mathcal{L}V$ when $x_0=-\frac34$.

\begin{figure}
	\centering
	\includegraphics[scale=1]{./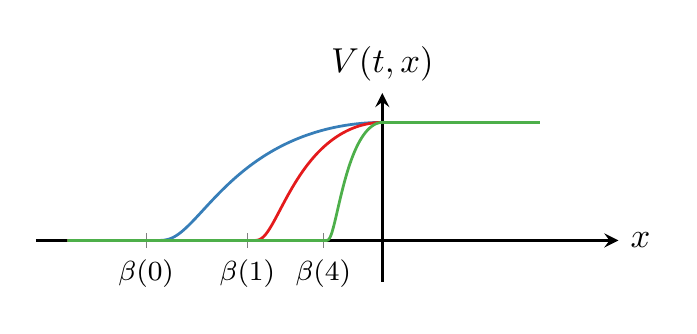} \hspace{2em}
	\includegraphics[scale=1]{./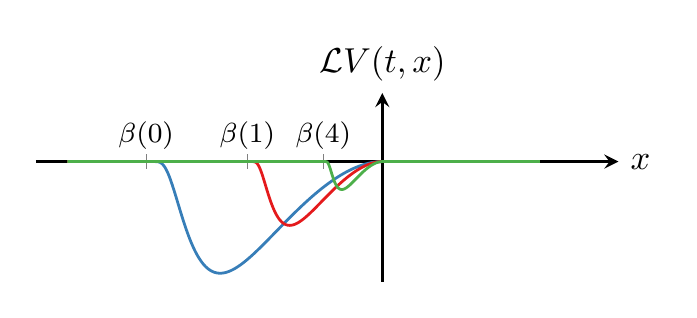}\\
	\includegraphics[scale=1]{./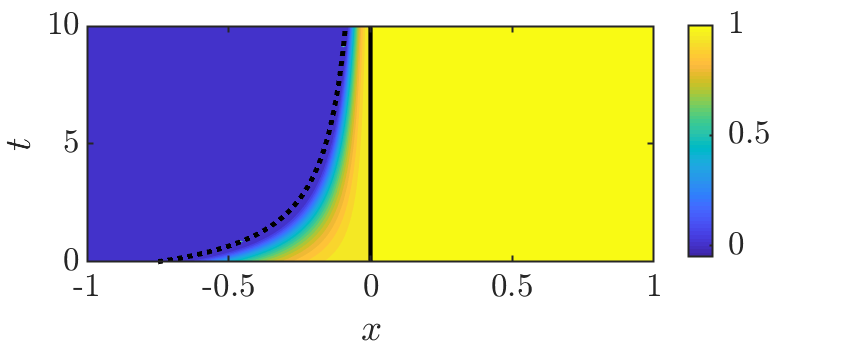} \hspace{2em}
	\includegraphics[scale=1]{./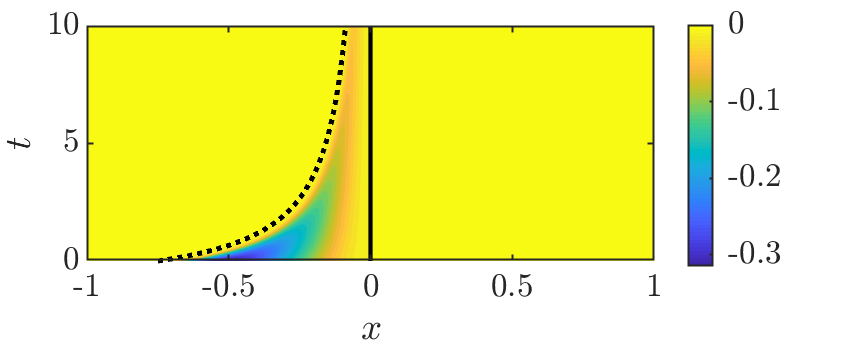}
	\caption{
		\textit{Top row:} Profiles of the auxiliary function $V(t,x)$ in~\cref{e:xdot=x2-Vopt} and its derivative along trajectories $\mathcal{L}V(t,x)$, plotted as a function of $x$ for $t=0$, $t=1$, and $t=4$. \textit{Bottom row:} Contours of $V$ (left) and $\mathcal{L}V$ (right). Lines mark the trajectory $x=\beta(t)$ (\dashedrule) and the semistable equilibrium $x=0$ (\solidrule). Outside the region between these two lines, $\mathcal{L}V=0$.
		All plots are for $x_0=-\frac34$.
	}
	\label{f:sketch-optimal-V}
\end{figure}

\section{Improving bounds iteratively with polynomial \texorpdfstring{$V$}{V} of fixed degree}
\label{app:iterative-procedure}
{Bounds computed with~\cref{e:sos-opt} can be improved without increasing the degree $d$ by using an iterative procedure. First, solve~\cref{e:sos-opt} to obtain an upper bound $\maxphi \leq \lambda_{d,0}^*$, which implies $\Phi(t,x) \leq \lambda_{d,0}^*$ along trajectories of interest. Then, replace the original set $\Omega$ in which trajectories remain with its subset $\Omega_1 := \Omega \cap \{(t,x): \Phi(t,x) \leq \lambda_{d,0}^*\}$. Since $\Omega_1 \subseteq \Omega$ is still basic semialgebraic, one can solve~\cref{e:sos-opt} again, but with the WSOS constraints defined on $\Omega_1$ rather than $\Omega$. This produces a new bound, $\maxphi \leq \lambda_{d,1}^* \leq \lambda_{d,0}^*$. The process can be iterated by taking $\Omega_{i+1} = \Omega \cap \{(t,x): \Phi(t,x) \leq \lambda_{d,i}^*\}$, $i=1,\,2,\,\ldots$, until the bound on $\maxphi$ stops improving. The WSOS optimization problem to be solved for each $i$ has constant computational cost, which is higher than the original one but typically much smaller than solving~\cref{e:sos-opt} with larger~$d$.}

\begin{table}[t]
	\caption{Upper bounds on $\maxphiinf$ for \cref{ex:sos-2d-example}, computed using time-independent polynomial auxiliary functions $V(x)$ of degree $d$ by the iterative procedure described in \cref{app:iterative-procedure}.}
	\label{t:sos-2d-example-bounds-iterative}
	\centering
	\small
	\begin{tabular}{ccccccccccccc}
		\toprule
		Iteration && $d=4$ && $d=6$ &&$d=8$ && $d=10$ &&$d=12$ &&$d=14$\\[2pt]
		\cline{1-1} \cline{3-3} \cline{5-5} \cline{7-7} \cline{9-9} \cline{11-11} \cline{13-13}\\[-2ex]
		1 && 2.194343 && 1.942396 && 1.931330 && 1.916228 && 1.903525 && 1.903448\\
		2 && 2.194343 && 1.934692 && 1.926088 && 1.913889 && 1.903346 && 1.903307\\
		3 && 2.194343 && 1.934643 && 1.926088 && 1.913817 && 1.903280 && 1.903250\\
		4 && 2.194342 && 1.934642 && 1.926086 && 1.913815 && 1.903260 && 1.903222\\
		5 && 2.194342 && 1.934642 && 1.926086 && 1.913814 && 1.903249 && 1.903207\\
		\bottomrule
	\end{tabular}
\end{table}

{\Cref{t:sos-2d-example-bounds-iterative} reports bounds on $\maxphiinf$ obtained with this iterative procedure for the problem described in \cref{ex:sos-2d-example}, using polynomial $V$ of degrees up to 14. Each iteration lowers the bound as expected. The improvement with each iteration is small in this example, especially with lower-degree $V$. Raising $d$ by 2 offers much more improvement except when the bound is nearly sharp already. It remains to be tested whether the iterative scheme brings more gains for other problems.}

\section{An elementary proof of \texorpdfstring{\cref{th:strong-duality}}{Theorem~\ref{th:strong-duality}}}
\label{s:direct-proof-strong-duality}
Under the assumptions of  \cref{th:strong-duality}, differentiable auxiliary functions that produce arbitrarily sharp bounds on~$\maxphi_T$ can be constructed by approximating the optimal but generally discontinuous $V^*$ defined in \cref{ss:discontinuous-afs}. {This construction, which resembles the argument in~\cite{Hernandez1996}, yields \cref{th:strong-duality} without the measure theory or convex analysis used in the proofs of~\cite{Lewis1980}.}

\subsection{Construction of near-optimal $V$} Let $\delta > 0$. We must show that there exists a $C^1$ function $V$ on $\Omega = [t_0,T]\times X$ that satisfies (\ref{e:V-conditions}a,b) and
\begin{equation}
	\label{e:delta-suboptimal-bound}
	\sup_{x_0 \in X_0} V(t_0,x_0) \leq  \maxphiT + \delta.
\end{equation}
To do this we construct $W\in C^1(\Omega)$ such that
\begin{subequations}
	\label{e:V-conditions-relaxed}
	\begin{align}
		\label{e:cond1-W}
		\mathcal{L}W(t,x) &\leq \frac{\delta}{5(T-t_0)} \hspace{15pt}\qquad\text{on } \Omega,\\
		\label{e:cond2-W}
		\Phi(t,x)  &\leq  W(t, x) + \frac{2}{5}\delta \qquad\text{on } \Omega, \\
		\label{e:cond3-W}
		\sup_{x_0 \in X_0} W(t_0,x_0) &\leq  \maxphiT + \frac{2}{5}\delta.
	\end{align}
\end{subequations}
Then,~(\ref{e:V-conditions}a,b) and~\cref{e:delta-suboptimal-bound} are satisfied by the continuously differentiable function
\begin{equation}
	V(t,x) := W(t,x) + \frac{2}{5}\delta + \frac{(T-t)\delta}{5(T-t_0)}.
\end{equation}

Our construction of $W$ uses the flow map $S_{(s,t)}: Y \to \R^n$, defined for any two fixed time instants $s$ and $t$ such that $t_0 \leq s \leq t \leq t_1$ as $S_{(s,t)} y = x(t \given s, y).$ In other words, $S_{(s,t)}y$ is the point at time~$t$ on the trajectory of the ODE $\dot{x}=F(\xi,x)$ that passed through~$y$ at time~$s$. An explicit expression for the flow map is generally not available. Nonetheless, under the assumptions of \cref{th:strong-duality}, the flow map is well defined and satisfies
\begin{subequations}
	\begin{gather}
		\label{e:flow-integration}
		S_{(s,t)}y = y + \int_s^t F[\xi, S_{(s,\xi)}y] \dxi,
		\\
		\label{e:group-property}
		S_{(s,t)}  \circ S_{(r,s)} = S_{(r,t)} \qquad \forall r,t,s:\,
		t_0 \leq r \leq s \leq t.
	\end{gather}
\end{subequations}
The function $(t,s,y) \mapsto S_{(s,t)}y$ is uniformly continuous with respect to both $s$ and $y$ for $t$ in compact time intervals; see, for instance,~\cite[Chapter V, Theorem 2.1]{Hartman2002}. It also is locally Lipschitz in the sense of the following Lemma, which is proved in \cref{app:proof-lemma-flow}.
\begin{lemma}
	\label{lemma:lemma-flow}
	Suppose the assumptions of \cref{th:strong-duality} hold and let $[a,b] \times K$ be a compact subset of $[t_0,t_1]\times Y$.
	There exist positive constants $C_1$ and $C_2$, dependent only on $a$, $b$, $K$, $t_0$ and $t_1$, such that:
	\begin{enumerate}[(i)]
		\item $\|S_{(t,\xi)} x - S_{(t,\xi)} y\| \leq C_1 \|x-y\|$ for all $x,y \in K$, all $t\in[a,b]$, and all {$\xi \in [t,t_1]$}.
		\item $\|S_{(t,\xi)} x - S_{(s,\xi)} x\| \leq C_2 \abs{t - s}$ for all $x \in K$, all $t,s \in [a,b]$, and all $\xi \in[\max(t,s),t_1]$.
	\end{enumerate}
\end{lemma}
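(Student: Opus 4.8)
The plan is to first confine every trajectory segment that appears in the two estimates inside a single compact spacetime ``tube'', on which $F$ admits a uniform Lipschitz constant $L$ and a uniform bound $M$, and then to read both inequalities off the integral form~\cref{e:flow-integration} of the flow: part~(i) via Gronwall's inequality, and part~(ii) by using the group property~\cref{e:group-property} to turn a shift of the initial \emph{time} into a shift of the initial \emph{state}.

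\textbf{Step 1: a compact tube (the main obstacle).} Consider the compact parameter set $\Delta := \{(t,\xi,x):\, t\in[a,b],\, x\in K,\, t\le\xi\le t_1\}$. Under the assumptions of \cref{th:strong-duality}, each $(t,x)$ with $t\in[a,b]\subseteq[t_0,t_1]$ and $x\in K\subseteq Y$ launches a trajectory that survives on $[t,t_1]$, so $(t,\xi,x)\mapsto S_{(t,\xi)}x$ is defined on all of $\Delta$ and is continuous by continuous dependence on initial data and time (e.g.\ \cite[Chapter V, Theorem 2.1]{Hartman2002}). Hence the image
\begin{equation*}
	D := \left\{ \big(\xi, S_{(t,\xi)}x\big):\, (t,\xi,x)\in\Delta \right\}
\end{equation*}
is a compact subset of $\R\times\R^n$. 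Because $F$ is locally Lipschitz, I can then fix a compact neighbourhood $N$ of $D$ and constants $L,M>0$, depending only on $a,b,K,t_0,t_1$, so that $\|F(\eta,p)-F(\eta,q)\|\le L\|p-q\|$ and $\|F(\eta,p)\|\le M$ whenever $(\eta,p),(\eta,q)\in N$. This is where assumptions (A.1)--(A.2) do all their work: the bounded neighbourhood $Y$ on which the flow persists to the common horizon $t_1$ is exactly what rules out escape or blow-up, keeps all segments inside $N$, and makes $D$ compact.

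\textbf{Step 2: part (i).} Fix $t\in[a,b]$ and any two points whose forward trajectories from time $t$ trace out segments in $D$ (this covers $x,y\in K$, and later also the shifted point below). For $\xi\in[t,t_1]$ both segments lie in $N$, so subtracting the identities~\cref{e:flow-integration} and using the Lipschitz bound gives
\begin{equation*}
	\big\| S_{(t,\xi)}x - S_{(t,\xi)}y \big\|
	\le \|x-y\| + L\int_t^\xi \big\| S_{(t,\eta)}x - S_{(t,\eta)}y \big\|\,{\rm d}\eta .
\end{equation*}
Gronwall's inequality yields $\|S_{(t,\xi)}x-S_{(t,\xi)}y\|\le \|x-y\|\,{\rm e}^{L(\xi-t)}$, and since $\xi-t\le t_1-t_0$ this proves~(i) with $C_1:={\rm e}^{L(t_1-t_0)}$.

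\textbf{Step 3: part (ii).} Fix $x\in K$ and $s,t\in[a,b]$; by symmetry assume $s\le t$, and take $\xi\ge\max(t,s)=t$. The group property~\cref{e:group-property} gives $S_{(s,\xi)}x = S_{(t,\xi)}\big(S_{(s,t)}x\big)$, so
\begin{equation*}
	\big\| S_{(t,\xi)}x - S_{(s,\xi)}x \big\|
	= \big\| S_{(t,\xi)}x - S_{(t,\xi)}\big(S_{(s,t)}x\big) \big\|
	\le C_1\,\big\| x - S_{(s,t)}x \big\| ,
\end{equation*}
where Step~2 applies because the forward trajectories $S_{(t,\cdot)}x$ and $S_{(t,\cdot)}\big(S_{(s,t)}x\big)=S_{(s,\cdot)}x$ both lie in $D$. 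Finally~\cref{e:flow-integration} with $\|F\|\le M$ on $N$ gives $\|S_{(s,t)}x-x\|\le M\abs{t-s}$, whence $\|S_{(t,\xi)}x-S_{(s,\xi)}x\|\le C_1 M\abs{t-s}$, proving~(ii) with $C_2:=C_1 M$. Both constants depend only on $a,b,K,t_0,t_1$ through $D$, as required. I expect Step~1 to be the only delicate point: one must verify that \emph{every} segment used in Steps~2--3 — including $S_{(s,t)}x$, which need not lie in $K$ — stays inside the compact set $N$ where $L$ and $M$ are valid, which is precisely what the persistence hypothesis (A.2) guarantees.
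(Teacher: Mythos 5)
Your proof is correct and follows essentially the same route as the paper's: uniform boundedness of all relevant trajectory segments via (A.2) and compactness, a Gronwall estimate for part (i), and the group property plus the integral form of the flow for part (ii). The only differences are cosmetic --- you use the integral form of Gronwall's inequality where the paper differentiates the norm, and you reuse $C_1$ directly in part (ii) where the paper introduces a separate constant $\Lambda_2$ --- and you correctly flag and handle the one delicate point, namely that $S_{(s,t)}x$ need not lie in $K$ but its forward segment stays in the compact tube.
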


We also need the following Lemma, proved in \cref{app:proof-U-function}, which states that the optimal but possibly discontinuous auxiliary function defined by~\cref{e:value-function} can be approximated by a locally Lipschitz function.
\begin{lemma} 
	\label{lemma:lemma-U-function}
	There exist {$t_2 \in (T,t_1)$ and} a locally Lipschitz function $U : [t_0,t_2]\times Y \to \R$ that satisfies
	\begin{subequations}
		\label{e:U-conditions}
		\begin{gather}
			\label{e:U-cond-1}
			\Phi(t,x) \leq U(t,x) + \frac{\delta}{5}  \quad\text{on } \Omega,\\
			\label{e:U-cond-3}
			\sup_{x_0 \in X_0} U(t_0,x_0) \leq \maxphiT + \frac{\delta}{5},
		\end{gather}
		and, for each fixed
		{$(t,x) \in [t_0,t_2)\times Y$}, 
		\begin{equation}
			\label{e:U-cond-2}
			U(t+\varepsilon,S_{(t,t+\varepsilon)}x)  \leq U(t,x) \qquad \forall \varepsilon \in (0,{t_2}-t).
		\end{equation}
	\end{subequations}
\end{lemma}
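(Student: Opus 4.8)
The plan is to construct the locally Lipschitz $U$ as a finite-horizon, spatially regularized version of the optimal (but discontinuous) auxiliary function $V^*$ from \cref{e:value-function}. Concretely, I would first fix the horizon $t_2$, then replace $\Phi$ by a smooth surrogate $\widetilde\Phi$, and finally define $U$ as the supremum of $\widetilde\Phi$-values along the flow up to time $t_2$. The three stated properties will then follow from the group property \cref{e:group-property}, from evaluating the defining supremum at its initial time, and from a choice of $t_2$ close enough to $T$.

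First I would choose $t_2 \in (T,t_1)$. The map $(\tau,x_0)\mapsto \Phi[\tau, S_{(t_0,\tau)}x_0]$ is continuous on the compact set $[t_0,t_1]\times X_0$ (the flow is continuous, as recalled before \cref{lemma:lemma-flow}, and $\Phi$ is continuous), hence uniformly continuous. Comparing times $\tau$ near $T$ with $T$ itself and using $\Phi[T,S_{(t_0,T)}x_0]\le\maxphiT$, there is $\eta>0$ so that $\Phi[\tau,S_{(t_0,\tau)}x_0]\le \maxphiT+\tfrac{\delta}{10}$ for all $x_0\in X_0$ whenever $T<\tau\le T+\eta$. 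I would then set $t_2:=\min\{T+\tfrac{\eta}{2},\,\tfrac{T+t_1}{2}\}\in(T,t_1)$. Since $\Phi[\tau,S_{(t_0,\tau)}x_0]\le\maxphiT$ already for $\tau\in[t_0,T]$, the observable along trajectories from $X_0$ exceeds $\maxphiT$ by at most $\tfrac{\delta}{10}$ throughout $[t_0,t_2]$.

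Next I would mollify $\Phi$ to obtain a smooth (hence locally Lipschitz) function $\widetilde\Phi$ with $\sup_{\mathcal K_0}|\Phi-\widetilde\Phi|\le\tfrac{\delta}{10}$, where $\mathcal K_0:=\Omega\cup\{(\tau,S_{(t_0,\tau)}x_0):x_0\in X_0,\ \tau\in[t_0,t_2]\}$ is compact. I would then define
\[
U(s,y):=\sup_{s\le\tau\le t_2}\widetilde\Phi[\tau,S_{(s,\tau)}y],\qquad (s,y)\in[t_0,t_2]\times Y,
\]
which is well defined because $S_{(s,\tau)}y$ exists for $s\le\tau\le t_2<t_1$ by assumption (A.2). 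Property \cref{e:U-cond-2} is then immediate from \cref{e:group-property}: since $S_{(t+\varepsilon,\tau)}(S_{(t,t+\varepsilon)}x)=S_{(t,\tau)}x$, the quantity $U(t+\varepsilon,S_{(t,t+\varepsilon)}x)$ is a supremum over the smaller interval $[t+\varepsilon,t_2]\subseteq[t,t_2]$ and is therefore bounded by $U(t,x)$. Condition \cref{e:U-cond-1} follows by taking $\tau=t$ in the defining supremum, giving $U(t,x)\ge\widetilde\Phi(t,x)\ge\Phi(t,x)-\tfrac{\delta}{10}$ on $\Omega$. Condition \cref{e:U-cond-3} follows by bounding the supremum of $\widetilde\Phi$ along trajectories from $X_0$ by the corresponding supremum of $\Phi$ plus $\tfrac{\delta}{10}$, and then bounding the latter by $\maxphiT+\tfrac{\delta}{10}$ using the previous paragraph, for a total of $\maxphiT+\tfrac{\delta}{5}$.

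The main obstacle is the local Lipschitz continuity of $U$, which I expect to be the only nontrivial step. On a compact box $[a,b]\times K\subseteq[t_0,t_2]\times Y$, Lipschitz continuity in $y$ for fixed $s$ is straightforward: part (i) of \cref{lemma:lemma-flow} bounds $\|S_{(s,\tau)}y-S_{(s,\tau)}y'\|$ by $C_1\|y-y'\|$ uniformly in $\tau$, and composing with the Lipschitz function $\widetilde\Phi$ and taking suprema gives a uniform constant. The delicate part is continuity in the initial time $s$, because the interval of the supremum shrinks as $s$ grows. For $s<s'$ and $\tau\in[s',t_2]$, part (ii) of \cref{lemma:lemma-flow} controls $\|S_{(s,\tau)}y-S_{(s',\tau)}y\|$ by $C_2|s-s'|$, which handles $U(s',y)-U(s,y)$ and also $U(s,y)-U(s',y)$ in the case where the maximizer $\tau^\ast$ of $U(s,y)$ satisfies $\tau^\ast\ge s'$. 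When instead $\tau^\ast\in[s,s')$, I would compare against the admissible choice $\tau=s'$ in $U(s',y)$ and use the integral representation \cref{e:flow-integration} with a bound $\|F\|\le M$ on the relevant compact set to estimate $\|S_{(s,\tau^\ast)}y-y\|\le M|s-s'|$ and $|\tau^\ast-s'|\le|s-s'|$; Lipschitz continuity of $\widetilde\Phi$ then closes the gap. Combining the time and space estimates yields a joint Lipschitz bound on $[a,b]\times K$, so $U$ is locally Lipschitz, completing the construction.
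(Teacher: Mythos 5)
Your construction is essentially the paper's own proof: you define $U$ as the supremum over $[t,t_2]$ of a Lipschitz surrogate of $\Phi$ evaluated along the flow, obtain \cref{e:U-cond-2} from the group property, \cref{e:U-cond-1} by evaluating at $\tau=t$, \cref{e:U-cond-3} by shrinking $t_2$ toward $T$, and local Lipschitz continuity from \cref{lemma:lemma-flow} with the same case split on where the maximizing time lies. The only cosmetic differences are that you fix $t_2$ via uniform continuity of $\Phi$ along trajectories from the compact set $X_0$ (the paper instead chooses $\gamma=t_2-T$ after extracting Lipschitz constants from \cref{lemma:lemma-flow}\textit{(ii)}) and that you build the surrogate by mollification rather than by Stone--Weierstrass plus Lipschitz extension; both variants are correct.
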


A function $W \in C^1(\Omega)$ that satisfies~(\ref{e:V-conditions-relaxed}a,b,c) can be constructed by mollifying $U$ ``{forward} in time'' on $\Omega$. Precisely, fix any nonnegative differentiable mollifier $\rho(t,x)$ that is supported on the closed unit ball of $\mathbb{R}\times \mathbb{R}^n$ and has unit integral. For each $k\ge1$ define
\begin{equation}
	\rho_k(t,x) := k^{n+1} \rho(kt{+}1,kx).
\end{equation}
Observe that $\rho_k$ is supported on $R_k = {[-2k^{-1},0]} \times B_n(0,k^{-1})$, where $B_n(0,r)$ denotes the closed $n$-dimensional ball of radius $r$ centered at the origin, and has unit integral. Let $k$ be large enough that ${[t_0,t_2]}\times Y$ contains the compact set
\begin{equation}
	\mathcal{N} 
	\phantom{:}= \{ {(t - s,x - y)}:\,(t,x) \in \Omega,\,(s,y) \in R_k \}.
	\label{e:N-neighborhood}
\end{equation}
{Note that $\Omega \subset \mathcal{N}$.} For each $(t,x) \in \Omega$, define
\begin{equation}
	W(t,x) 
	:= (\rho_k \ast U)(t,x)
	= \int_{R_k} \rho_k(s,y) U(t-s,x-y) \,{\rm d}s \, {\rm d}^ny.
	\label{e:Wdef}
\end{equation}
Since $R_k$ contains only {nonpositive times $s\leq 0$}, $W$ is a {forward}-in-time mollification of $U$. Standard arguments~\cite[Appendix C.4]{Evans1998} show that $W$ is continuously differentiable on $\Omega$. Because $\Omega$ is compact and $U$ is continuous, $W\to U$ uniformly on $\Omega$ as $k\to\infty$. Thus we can choose $k$ large enough to ensure
\begin{equation}
	\label{e:norm-U-W}
	\|U - W\|_{C^0(\Omega)} \leq \frac{\delta}{5},
\end{equation}
To see that $W$ satisfies~\cref{e:cond3-W}, combine~\cref{e:norm-U-W} with~\cref{e:U-cond-3} to estimate
\begin{equation}
	\sup_{x_0 \in X_0} W(t_0,x_0)
	\leq \sup_{x_0 \in X_0} U(t_0,x_0)  + \|U - W\|_{C^0(\Omega)}
	\leq \maxphiT + \frac{2}{5}\delta.
\end{equation}
We similarly obtain~\cref{e:cond2-W} by estimating the righthand side of~\cref{e:U-cond-1} as
\begin{equation}
	\Phi(t,x) 
	\leq U(t,x) + \frac{\delta}{5}
	\leq W(t,x) + \|U - W\|_{C^0(\Omega)} + \frac{\delta}{5}
	\leq W(t,x) + \frac{2}{5}\delta.
\end{equation}
To prove~\cref{e:cond1-W}, fix $(t,x) \in \Omega$ and bound
\begin{align}
	\label{e:LW-estimate-1}
	\mathcal{L}W(t,x)
	&= \lim_{\varepsilon \to 0} \frac{W(t+\varepsilon,S_{(t,t+\varepsilon)}x) - W(t,x)}{\varepsilon} \\[1ex]
	&= \lim_{\varepsilon \to 0} \frac{1}{\varepsilon} \int_{R_k} \rho_k(s,y) 
	\left[ U(t+\varepsilon-s,S_{(t,t+\varepsilon)}x-y) 
	- U(t-s,x-y)
	\right]{\rm d}s \, {\rm d}^ny
	\notag \\[1ex]
	&\leq \lim_{\varepsilon \to 0} \frac{1}{\varepsilon} \int_{R_k} \rho_k(s,y) 
	\left\{ U(t+\varepsilon-s,S_{(t,t+\varepsilon)}x-y)  \right.
	\notag \\[-0.5ex]
	& \left.\hspace{150pt}- U[t+\varepsilon-s,S_{(t-s,t-s+\varepsilon)}(x-y)]
	\right\} {\rm d}s \, {\rm d}^ny
	\notag \\[1ex]
	&\leq \lim_{\varepsilon \to 0} \frac{C}{\varepsilon} \int_{R_k} \rho_k(s,y) 
	\left\| S_{(t,t+\varepsilon)}x - y - S_{(t-s,t-s+\varepsilon)}(x-y) \right\| {\rm d}s \, {\rm d}^ny,
	\notag
\end{align}
where $C$ is a positive constant independent of $t$ and $x$. The two inequalities above follow, respectively, from~\cref{e:U-cond-2} and the uniform Lipschitz continuity of $U$ on compact sets.

Since {$t \leq T < t_2$}, forward-in-time trajectories are well defined for sufficiently small $\varepsilon$. Moreover, reasoning as in the proof of \cref{lemma:lemma-flow} in \cref{app:proof-lemma-flow} shows that trajectories starting from the compact neighborhood $\mathcal{N}$ of $\Omega$ defined in~\cref{e:N-neighborhood} are uniformly bounded up to time {$t_2$}. Thus the rightmost integrand in~\cref{e:LW-estimate-1} is uniformly bounded and, by the dominated convergence theorem, we can exchange the limit and the integral. Then, we can further estimate $\mathcal{L}W$ using the fact that $\rho_k$ has unit integral over $R_k$, the relation~\cref{e:flow-integration}, and the mean value theorem:
\begin{align}
	\mathcal{L}W(t,x) 
	&\leq C 
	\adjustlimits
	\max_{(s,y)\in R_k} 
	\lim_{\varepsilon \to 0} \frac{1}{\varepsilon} \left\| S_{(t,t+\varepsilon)}x - y - S_{(t-s,t-s+\varepsilon)}(x-y) \right\|
	\\[0.5ex]
	&=C 
	\adjustlimits
	\max_{(s,y)\in R_k} 
	\lim_{\varepsilon \to 0} \frac{1}{\varepsilon} \left\| 
	\int_t^{t+\varepsilon} F(\xi, S_{(t,\xi)}x) \dxi 
	- \int_{t-s}^{t-s+\varepsilon} F[\xi, S_{(t-s,\xi)}(x-y)] \dxi 
	\right\|
	\notag \\[0.5ex]
	&=C \max_{(s,y)\in R_k} 
	\left\|  F(t,x) - F(t-s, x-y) \right\|.
	\phantom{\int_t^{t+\varepsilon}}	
	\notag
\end{align}
Both $(t,x)$ and $(t-s,x-y)$ lie in the compact set $\mathcal{N}$. Since $F$ is locally Lipschitz by assumption, it is uniformly Lipschitz on $\mathcal{N}$. Consequently, there exist a constant $C'$, independent of $t$ and $x$, and a $k$ sufficiently large such that
\begin{equation}
	{\mathcal{L}W(t,x) \leq C' \max_{(s,y)\in R_k} 
		(\abs{s} + \left\|  y \right\|) = \frac{3 C'}{k} \leq \frac{\delta}{5(T-t_0)},}
	\label{e:LW-complete}
\end{equation}
meaning that $W$ satisfies~\cref{e:cond1-W} as claimed. This concludes the proof of \cref{th:strong-duality}.

\begin{remark}
	{Defining $\rho_k$ such that the mollification~\cref{e:Wdef} is forward in time, so $s \leq 0$ on $R_k$, is key to prove~\cref{e:LW-complete} for all $(t,x) \in \Omega=[t_0,T]\times X$. If $s>0$ anywhere on $R_k$, given any finite $k$ we would have $t-s < t_0$ for all $t \in [t_0,t_k]$ and some $t_k> t_0$. In this case, we would not have the first inequality in~\cref{e:LW-estimate-1} for all $(t,x) \in \Omega$ because~\cref{e:U-cond-2} holds only after time~$t_0$.}
\end{remark}

\subsection{Proof of \texorpdfstring{\cref{lemma:lemma-flow}}{Lemma~\ref{lemma:lemma-flow}}}
\label{app:proof-lemma-flow}
To establish part \textit{(i)} of \cref{lemma:lemma-flow}, observe that assumption (A.2) in \cref{th:strong-duality} guarantees that the trajectory starting from any $x \in K$ at any time $t \in [a,b]$ exists up to time {$t_1$}, so in particular $\|S_{(t,\xi)} x\|$ is bounded for all $\xi \in [t,{t_1}]$. Combining the compactness of $[a,b] \times K$ with the continuity of trajectories with respect to both the initial point and the initial time~\cite[Chapter V, Theorem 2.1]{Hartman2002} shows that trajectories are uniformly bounded in norm. Precisely, there exists a constant $M$, depending only on $a$, $b$, $K$ and ${t_1}$, such that $\|S_{(t,\xi)} x\| \leq M$ for all $(t,x) \in [a,b] \times K$ and all $\xi \in [t,{t_1}]$. We therefore can apply Lemma 2.9 from~\cite{Robinson2001} and the local Lipschitz continuity of $F(\cdot,\cdot)$ to find a constant $\Lambda_1$, dependent only on $a$, $b$ and $K$, such that
\begin{equation}
	\frac{{\rm d}}{{\rm d} \xi} \| S_{(t,\xi)} x - S_{(t,\xi)} y \| 
	\leq \|F(\xi,S_{(t,\xi)} x) - F(\xi,S_{(t,\xi)} y)\|
	\leq \Lambda_1 \| S_{(t,\xi)} x - S_{(t,\xi)} y \|
\end{equation}
for all $x,y \in K$, all $t \in [a,b]$, and all $\xi \in [t,{t_1}]$. Assertion \textit{(i)} then follows with $C_1 = {\rm e}^{\Lambda_1 {t_1}}$ after applying Gronwall's inequality to bound
\begin{equation}
	\| S_{(t,\xi)} x - S_{(t,\xi)} y \| \leq {\rm e}^{\Lambda_1 \xi} \| x - y \| \leq {\rm e}^{\Lambda_1 {t_1}} \| x - y \|.
\end{equation}

To prove part \textit{(ii)} of \cref{lemma:lemma-flow}, assume without loss of generality that $s<t$. For all $\xi \in[t,{t_1}]$, identity~\cref{e:group-property} gives
$\| S_{(t,\xi)} x - S_{(s,\xi)} x \| = 
\| S_{(t,\xi)} x - S_{(t,\xi)}S_{(s,t)} x\|.
$
Proceeding as above with $y=S_{(s,t)} x$ shows that
\begin{equation}
	\label{e:preliminary-estimate-lemma-flow}
	\| S_{(t,\xi)} x - S_{(s,\xi)} x \| \leq \Lambda_2 \left\| x - S_{(s,t)}x \right\|
\end{equation}
for some positive constant $\Lambda_2$.
Moreover, we can use~\cref{e:flow-integration} to estimate
\begin{equation}
	\left\| S_{(s,t)}x - x \right\| 
	= \left\| \int_s^t F(\xi, S_{(s,\xi)}x ) \,{\rm d}\xi \right\| 
	\leq \sqrt{n} \int_s^t \| F(\xi,S_{(s,\xi)} x)\| \,{\rm d}\xi.
\end{equation}
Since $F$ is continuous and, as noted above, $\|S_{(s,\xi)} x\| \leq M$ for all $(s,x) \in [a,b] \times K$ and all $\xi \in [s,{t_1}] \subset [a,{t_1}]$,
\begin{equation}
	\left\| S_{(s,t)}x - x \right\| \leq \sqrt{n} \max_{\substack{\xi \in [a,{t_1}] \\ \|y\|\leq M}} \| F(\xi,y) \| \, \abs{t-s}.
\end{equation}
Combining this with~\cref{e:preliminary-estimate-lemma-flow} proves the claim for a suitable choice of $C_2$.

\subsection{Proof of \texorpdfstring{\cref{lemma:lemma-U-function}}{Lemma~\ref{lemma:lemma-U-function}}}
\label{app:proof-U-function}
Fix $t_2 = T + \gamma$ for some $\gamma>0$ sufficiently small and to be determined later. Arguing as in the proof of \cref{lemma:lemma-flow}\textit{(i)}, trajectories starting from $x_0 \in X_0$ remain bounded uniformly in the initial condition and time. Precisely, there exists a constant $M$ such that $\|S_{(t_0,t)}x_0\|\leq M$ for all $x_0 \in X_0$ and $t \in [t_0,t_2]$. If $\mathcal{B}$ denotes the $n$-dimensional ball of radius $M$ centered at the origin, we conclude that the compact set $[t_0,t_2] \times \mathcal{B}$  contains $\Omega = [t_0,T]\times X$, the spacetime set in which trajectories starting from $x_0 \in X_0$ at time $t_0$ remain up to time $T$.

Let $\Psi: \mathbb{R}\times \mathbb{R}^n \times Y \to \R$ be a Lipschitz approximation of $\Phi$ satisfying
\begin{equation}
	\label{e:phi-approximation-psi}
	\|\Phi - \Psi\|_{C^0([t_0,t_2] \times \mathcal{B})} \leq \frac{\delta}{10}.
\end{equation}
Such $\Psi$ may be constructed in a number of ways, for instance by using the Stone--Weierstrass theorem to approximate $\Phi$ uniformly on the compact set $[t_0,t_2] \times \mathcal{B}$ by a polynomial, and extending such polynomial to a Lipschitz function on $\mathbb{R}\times \mathbb{R}^n$. We claim that $t_2$ can be chosen such that the function $U: {[t_0,t_2]}\times Y \to \mathbb{R}$ defined as
\begin{equation}
	\label{e:U-def}
	U(t,x) := \sup_{\tau \in [t,t_2]} \Psi[\tau, S_{(t,\tau)}x]
\end{equation}
satisfies~(\ref{e:U-conditions}a--c). This $U$ cannot be computed in practice but is well defined. Note that if $\Phi$ is Lipschitz we can choose $\Psi=\Phi$ and the restriction of $U$ to $\Omega$ tends to the optimal but possibly discontinuous auxiliary function defined in~\cref{e:value-function} as $\gamma = t_2 - T$ tends to zero. If $\gamma$ is finite but small, then $U$ approximates this optimal auxiliary function. The same is true when $\Psi$ only approximates $\Phi$.

To see that~\cref{e:U-cond-1} holds, note that $U(t,x) \geq \Psi(t,x)$. Since $\Omega \subset [t_0,t_2] \times \mathcal{B}$ we conclude from~\cref{e:phi-approximation-psi} that, for all $(t,x) \in \Omega$,
\begin{equation}
	\Phi(t,x) 
	\leq \Psi(t,x) + \|\Phi - \Psi\|_{C^0([t_0,t_2] \times \mathcal{B})}
	\leq U(t,x) + \frac{\delta}{10}
	< U(t,x) + \frac{\delta}{5}.
\end{equation}

To prove~\cref{e:U-cond-3}, we will choose $\gamma=t_2-T$ such that
\begin{equation}
	\label{e:U-bound-sufficient-cond3-A}
	U(t_0,x_0) = \sup_{\tau \in [t_0,t_2]} \Psi[\tau, S_{(t_0,\tau)}x_0] \leq \maxphiT + \frac{\delta}{5}
\end{equation}
uniformly in the initial condition $x_0 \in X_0$. To do this, fix $x_0 \in X_0$ and observe that the supremum over $\tau \in [t_0,t_2]$ must be attained because the function $\tau \mapsto \Psi[\tau, S_{(t_0,\tau)}x_0]$ is continuous. If the supremum is attained on the interval $[t_0,T]$, then
\begin{align}
	\label{e:U-bound-sufficient-cond3-B}
	\sup_{\tau \in [t_0,t_2]} \Psi[\tau, S_{(t_0,\tau)}x_0] 
	&= \sup_{\tau \in [t_0,T]} \Psi[\tau, S_{(t_0,\tau)}x_0] \\
	&\leq \sup_{\tau \in [t_0,T]} \Phi[\tau, S_{(t_0,\tau)}x_0] + \|\Phi - \Psi\|_{C^0([t_0,t_2] \times \mathcal{B})}
	\nonumber\\
	&\leq \maxphiT + \frac{\delta}{10}.
	\nonumber
\end{align}
Instead, if the supremum is attained at time $t^* \in [T,t_2]$, then we can use the Lipschitz continuity of $\Psi$, the group property~\cref{e:group-property} of the flow map, and \cref{lemma:lemma-flow}\textit{(ii)} to find constants $C$ and $C'$, dependent on $t_0$, $t_1$ and the set $X_0$ but not on the choice of $x_0 \in X_0$, such that
\begin{align}
	\label{e:U-bound-sufficient-cond3-C}
	\sup_{\tau \in [t_0,t_2]} \Psi[\tau, S_{(t_0,\tau)}x_0] 
	&= \Psi[t^*, S_{(t_0,t^*)}x_0]
	\\[-2ex]
	&\leq  \Psi[T, S_{(t_0,T)}x_0] + \abs{ \Psi[t^*, S_{(t_0,t^*)}x_0] - \Psi[T, S_{(t_0,T)}x_0] }
	\nonumber\\
	&\leq \Psi[T, S_{(t_0,T)}x_0]  + C\abs{t^*-T} + C \| S_{(T,t^*)} S_{(t_0,T)}x_0 - S_{(t_0,T)}x_0 \|
	\nonumber\\
	&\leq \Phi[T, S_{(t_0,T)}x_0] + \|\Phi - \Psi\|_{C^0([t_0,t_2] \times \mathcal{B})} + (C + C') \abs{t^*-T}
	\nonumber\\
	&\leq \maxphiT + \frac{\delta}{10} + (C + C') \gamma.
	\nonumber
\end{align}
Upon setting $\gamma = \delta/[10(C+C')]$,~\cref{e:U-bound-sufficient-cond3-B} and~\cref{e:U-bound-sufficient-cond3-C} prove that~\cref{e:U-bound-sufficient-cond3-A} holds uniformly in the initial condition $x_0$ irrespective of whether the sup over $\tau$ is attained before or after time $T$.

Finally, to obtain~\cref{e:U-cond-2}, fix $(t,x) \in {[t_0,t_2)}\times Y$ and observe that, for all $\varepsilon\in (0,{t_2}-t)$,
\begin{align}
	U(t+\varepsilon,S_{(t,t+\varepsilon)}x) 
	&= \sup_{\tau \in [t+\varepsilon,{t_2}]} \Psi[\tau, S_{(t+\varepsilon,\tau)}S_{(t,t+\varepsilon)}x] \\
	&= \sup_{\tau \in [t+\varepsilon,{t_2}]} \Psi[\tau, S_{(t,\tau)} x] \notag \\
	&\leq \sup_{\tau \in [t,{t_2}]} \Psi[\tau, S_{(t,\tau)} x] \notag \\
	&= U(t,x). \notag
\end{align}

To conclude the proof of \cref{lemma:lemma-U-function}, we must prove that $U$ is locally Lipschitz on ${[t_0,t_2]}\times Y$, meaning that for each compact subset $[a,b] \times K$ of ${[t_0,t_2]}\times Y$ there exists a constant $C$ (dependent only on $a$, $b$, $K$, {$t_0$}, and {$t_2$}) such that
\begin{equation}
	\label{e:U-lipschitz}
	\abs{U(t,x) - U(s,y)} \leq C \left( \abs{s-t} + \|x-y\| \right) \qquad \forall\,
	(t,x),(s,y) \in  [a,b] \times K.
\end{equation}
Clearly, it suffices to find constants $C'$ and $C''$ such that
\begin{subequations}
	\begin{align}
		\label{e:wt-minus-ws}
		U(t,x)- U(s,y)  \leq C' \left( \abs{t-s} + \| x -  y\| \right),\\
		U(s,y) -U(t,x)  \leq C'' \left( \abs{t-s} + \| x -  y\| \right),
		\label{e:ws-minus-wt}
	\end{align}
\end{subequations}
To simplify the presentation below, we let $C$ to denote any absolute constant; its value may vary from line to line. We also assume without loss of generality that $s\leq t$.

To prove~\cref{e:wt-minus-ws} observe that, since $s\leq t$,
\begin{align}
	\label{e:wt-minus-ws-estimate}
	U(t,x)- U(s,y) 
	&=
	\sup_{\tau \in [t,{t_2}]} \Psi[\tau, S_{(t,\tau)}x]
	- \sup_{\tau \in [s,{t_2}]} \Psi[\tau, S_{(s,\tau)}y]
	\\
	&\leq
	\sup_{\tau \in [t,{t_2}]} \Psi[\tau, S_{(t,\tau)}x]
	- \sup_{\tau \in [t,{t_2}]} \Psi[\tau, S_{(s,\tau)}y]
	\notag \\
	&\leq
	\sup_{\tau \in [t,t_2]} \left\{ 
	\Psi[\tau, S_{(t,\tau)}x]
	- \Psi[\tau, S_{(s,\tau)}y] \right\}. \notag
\end{align}
The term inside the last supremum can be bounded uniformly in $\tau$. The Lipschitz continuity of $\Psi$ and \cref{lemma:lemma-flow} imply
\begin{align}
	\Psi[\tau, S_{(t,\tau)}x] - \Psi[\tau, S_{(s,\tau)}y]
	&\leq C \|S_{(t,\tau)} x - S_{(s,\tau)} y\|
	\\
	&\leq C \|S_{(t,\tau)} x - S_{(t,\tau)} y\| + C \|S_{(t,\tau)} y - S_{(s,\tau)} y\|
	\notag \\
	&\leq C \left( \| x -  y\| + \abs{t-s} \right).
	\notag
\end{align}
Combining this estimate with~\cref{e:wt-minus-ws-estimate} yields~\cref{e:wt-minus-ws}. 

To show~\cref{e:ws-minus-wt} we seek an upper bound on
\begin{equation}
	U(s,y)- U(t,x) = 
	\sup_{\tau \in [s,t_2]} \Psi[\tau, S_{(s,\tau)}y] - 
	\sup_{\tau \in [t,t_2]} \Psi[\tau, S_{(t,\tau)}x].
\end{equation}
If the first supremum can be restricted to $[t,t_2]$ without affecting its value, then we proceed as before. Otherwise, we restrict the supremum to $[s,t]$ and estimate
\begin{equation}
	\label{e:ws-minus-wt-partial}
	U(s,y)- U(t,x) 
	\leq
	\sup_{\tau \in [s,t]}  \Psi[\tau, S_{(s,\tau)}y]
	- \Psi(t,x)
	=
	\sup_{\tau \in [s,t]} \left\{ 
	\Psi[\tau, S_{(s,\tau)}y]
	- \Psi(t,x)
	\right\}.
\end{equation}
As before, the term inside the supremum can be bounded uniformly in $\tau$ using Lipschitz continuity and \cref{lemma:lemma-flow}. Precisely, since $\tau \leq t$ and $S_{(\tau,\tau)}y=y$,
\begin{align}
	\Psi(\tau,S_{(s,\tau)} y) - \Psi(t,x)
	&\leq C \left( \abs{\tau-t} + \|S_{(s,\tau)} y - x\|\right)
	\\
	&\leq C \left( \abs{t-s} + \|S_{(s,\tau)} y - S_{(\tau,\tau)}y \| + \| y - x\|\right)
	\notag \\
	&\leq C \left( \abs{t-s} + \| x-y \|\right). \notag
\end{align}
Combining these estimates with~\cref{e:ws-minus-wt-partial} yields~\cref{e:ws-minus-wt}.

\pdfbookmark{References}{bookmark:references}
\bibliography{./references.bib}

\end{document}